\documentclass{amsart}
\usepackage{color}
\usepackage{graphicx}
\usepackage{frcursive}
\newcommand{\dt}{\partial_t}
\newcommand{\dx}{\partial_x}

\newcommand{\eps}{\varepsilon}
\newcommand{\R}{{\mathbb R}}

\newtheorem{definition}{Definition}
\newtheorem{theorem}{Theorem}
\newtheorem{proposition}{Proposition}
\newtheorem{lemma}{Lemma}
\theoremstyle{remark}
\newtheorem{remark}{Remark}

\title{Generating boundary conditions for a Boussinesq system}
\author{D. Lannes}
\address{Institut de Math\'ematiques de Bordeaux et CNRS, UMR 5251}
\email{David.Lannes@math.u-bordeaux.fr}
\author{L. Weynans}
\address{Institut de Math\'ematiques de Bordeaux et CNRS, UMR 5251}
\email{Lisl.Weynans@math.u-bordeaux.fr}
\thanks{D. L. is supported by the Fondation Del Duca de l'Acad\'emie des Sciences, the ANR grants ANR-17-CE40-0025 NABUCO and ANR-18-CE40-0027-01 Singflows, and the Conseil R\'egional d'Aquitaine.}
\begin{document}
\maketitle

\begin{abstract}
We present a new method for the numerical implementation of generating boundary conditions for a one dimensional Boussinesq system. This method is based on a reformulation of the equations and a resolution of the dispersive boundary layer that is created at the boundary when the boundary conditions are non homogeneous. This method is implemented for a simple first order finite volume scheme and validated by several numerical simulations. Contrary to the other techniques that can be found in the literature, our approach does not cause any increase in computational time with respect to periodic boundary conditions.
\end{abstract}
\section{Introduction}

\subsection{General setting}

Among the many reduced models used to describe the evolution of waves at the surface of a fluid in shallow water, the nonlinear shallow water equations are certainly one of the most used for applications. They can be written in conservative form as
\begin{equation}\label{WE1ter}
\begin{cases}
\dt \zeta +\dx q=0,\\
\dt q+\dx \big(\frac{1}{2}gh^2+\frac{1}{h}q^2\big)=0,
\end{cases}
\end{equation}
where $\zeta$ is the surface elevation above the rest state and $q$ the horizontal discharge (equivalently, the vertical integral of the horizontal velocity), and where $h=H_0+\zeta$ is the total water depth ($H_0$ being the depth at rest) and $g$ the acceleration of gravity.\\
For many applications, the surface elevation is known at the entrance of the domain (through buoy measurements for instance, or it can be provided by offshore swell models) and is imposed as a boundary condition for the model 
\begin{equation}\label{CB}
\zeta(t,x=0)=f(t), \quad \mbox{ for all } t\geq 0,
\end{equation}
as well as the initial values for $q$ and $\zeta$ in the domain,
\begin{equation}\label{CI}
(\zeta,q)(t=0,x)=(\zeta^0,q^0)(x),\quad \mbox{ for all } x\geq 0;
\end{equation}
this type of boundary condition is often referred to as {\it generating} boundary condition (see for instance \cite{DS}). It is used a lot in coastal oceanography, where the offshore swell is imposed at the entrance of the domain of interest (see for instance the classical benchmark \cite{Dingemans}).\\
The problem consisting in solving \eqref{WE1ter} together with \eqref{CB} and \eqref{CI} is a {\it mixed initial-boundary value problem (IBVP)}; due to its hyperbolic nature,  it can be solved theoretically (see for instance \cite{LY,PT}, and more recently \cite{IguchiLannes} for sharp well-posedness results). From the numerical viewpoint, solving this IBVP is also possible, using the decomposition of the solution into Riemann invariants (see for instance \cite{Marche}).

The nonlinear shallow water equations provide a robust model used in many applications; it is known \cite{AL,Iguchi,Lannes_book,Lannes_SW} to provide an approximation of the full free surface Euler equations with a precision $O(\mu)$, where $\mu=H_0^2/L^2$ is the shallowness parameter ($L$ denotes here the typical horizontal scale of the waves). It omits however the dispersive effects that play an important role in coastal areas, in particular in the shoaling zone; in order to take them into account, one has to keep the $O(\mu^2)$ terms that are neglected in the derivation of the nonlinear shallow water equations. The most simple models that reach such a precision and therefore take into account the dispersive effects while retaining nonlinear terms are the so-called Boussinesq models. There are actually many asymptotically equivalent Boussinesq models \cite{BCS1,BCS2,BCL}; their simplicity is due to the fact that they are derived under the assumption that   the waves are  of small amplitude compared to the depth, which allows to neglect some of the nonlinear terms (without this assumption, one has to work with the much more complicated Serre-Green-Naghdi equations, see \cite{Lannes_book} for instance). We choose here to work with the so-called Boussinesq-Abbott model \cite{Abbott,FBCR} since its structure is obviously a dispersive perturbation of the nonlinear shallow water equations,
\begin{equation}\label{Bouss1ND_int}
\begin{cases}
\dt \zeta +\dx q =0,\\
(1-\frac{H_0^2}{3}\dx^2)\dt q+\dx \big(\frac{1}{2} gh^2 + \frac{1}{h}q^2\big)=0,\qquad (h=H_0+\zeta)
\end{cases}
\end{equation}
(removing the dispersive term $ -\frac{H_0^2}{3}\dx^2\dt q$, this model reduces to \eqref{WE1ter}). As above, we are interested in the initial-boundary value problem for this system, we therefore complement it with the boundary condition \eqref{CB} and initial condition \eqref{CI}. Contrary to \eqref{WE1ter}, this system is no longer hyperbolic, and there is no general theory to address the IBVP. Only some particular cases have been considered, such as in \cite{Xue} with homogeneous boundary conditions, \cite{BC,ADM} for a particular class of Boussinesq systems (the Bona-Smith family) where a regularizing dispersion is also present in the first equation, \cite{LCZ} for a higher order Boussinesq system or \cite{LM} for the shoreline problem (vanishing depth). \\
Due to its importance for numerical simulations in coastal oceanography, there has been a significant amount of work devoted to finding numerical answers to this issue in recent years. For the related problem of transparent boundary conditions in particular (i.e. which boundary conditions should be put at the boundary of the computational domain so that waves can pass through this artificial boundary without being affected by it), the linear problem has been considered for scalar equations (such as KdV or BBM) in \cite{BMN,BNS} as well as for the linearization of \eqref{Bouss1ND_int} around the rest state. For the nonlinear case, a different approach has been used recently, which consists in implementing a perfectly matched layer (PML) approach for a hyperbolic relaxation of the Green-Naghdi equations \cite{Kazakova}. This approach can be used to deal with generating boundary conditions such as \eqref{CB} but the size of the layer in which the PML approach is implemented is typically of two wavelength, which for applications to coastal oceanography can typically represent an increase of $100\%$ of the computational domain. Other methods such as the source function method \cite{WKS} also require a significant increase of computational time.

Our goal in this note is to propose a new approach to the nonlinear Boussinesq system \eqref{Bouss1ND_int} with generating boundary condition \eqref{CB} and initial condition \eqref{CI}, and which does not require any extension of the computational domain. It is based in a reformulation of the problem \eqref{Bouss1ND_int} and \eqref{CB}-\eqref{CI} into a non-homogeneous system of conservation laws for $\zeta$ and $q$ with a nonlocal flux, and with  a source term accounting for the dispersive boundary layer,
\begin{equation}\label{WE1terND_reform_intro}
\begin{cases}
\displaystyle \dt \zeta +\dx q =0,\\
\displaystyle \dt q+\dx R_1  \big( \frac{1}{2} g(h^2-H_0^2)+\frac{1}{h}q^2 \big)= \underline{{\mathcal Q}}(\underline{q},f,\ddot f, \zeta, q)\exp\big(-\sqrt{3}\frac{x}{H_0}\big),
\end{cases}
\end{equation}
where $\underline{q}=q_{\vert_{x=0}}$ and
\begin{align}
\nonumber
\underline{{\mathcal Q}}(\underline{q},f,\ddot f, \zeta, q)=&\frac{\sqrt{3}}{H_0}\frac{\underline{q}^2}{H_0+ f}+\frac{H_0}{\sqrt{3}} \ddot f+\frac{g\sqrt{3}}{H_0}(H_0+\frac{1}{2} f)f \\
 \label{defODEint}
& - \frac{\sqrt{3}}{H_0} \underline{R}_1 \big( \frac{1}{2}g (h^2-H_0^2)+\frac{1}{h}q^2 \big),
\end{align}
with the initial condition
\begin{equation}\label{IBC_Bouss_bis_int}
(\zeta,q)(t=0,x)=(\zeta^0,q^0)(x),\qquad \underline{q}(t=0)=q^0(x=0),
\end{equation}
and the boundary condition
\begin{equation}\label{Bouss_reformBC_int}
\zeta(t,x=0)=f(t);
\end{equation}
here, we denoted by $R_1$ the inverse of the operator $(1-\frac{H_0^2}{3} \dx^2)$ on $\R_+$ with homogeneous Neumann boundary condition, and $\underline{R}_1 f =(R_1 f)_{\vert_{x=0}}$ (see Definition \ref{defR} below). The source term in the equation for $q$ is a dispersive boundary layer that appears because the time derivative of the trace $\underline{q}$ of $q$ at $x=0$ does not necessarily vanish. 

We propose here a simple numerical scheme based on this new formulation of the problem (which we prove to be well-posed), very easy to implement and that does not require to work on an extended computational domain. The ability of this method to handle generating boundary conditions with a very good precision is illustrated by several computations where nonlinear and dispersive terms both play an important role.

\subsection{Organization of the paper}

We describe in this paper how to handle a generating boundary condition on the left border of the computational domain. 
For the sake of clarity, we consider the problem on the half line $(0,\infty)$ so that we do not have to deal with boundary conditions at the right boundary; for our numerical simulations, we either consider a domain that is large enough so that the influence of the right boundary condition  is negligible, or take a wall boundary condition $q=0$ on the right boundary (of course, a generating boundary condition on the right-boundary can be handled by a straightforward adaptation of what is done at the left boundary).

\medbreak

In Section \ref{secthyp} we briefly recall the theory and numerical simulation of generating boundary conditions for the nonlinear shallow water equations in order to make clear that different mechanisms are at stake in the hyperbolic (shallow water) and dispersive (Boussinesq) cases. Note in particular that for the hyperbolic case considered in this section, the missing data at the boundary (i.e. the trace of the discharge $q$ at $x=0$) is deduced from the value at the boundary of the outgoing Riemann invariant which can itself be determined in terms of interior values by solving the characteristic equation.

In Section \ref{sectBou} we consider generating boundary conditions for the Boussinesq system \eqref{Bouss1ND_int}. In order to make more apparent the structure of the dispersive boundary layer we shall construct, we first non-dimensionalize the equations in \S \ref{sectdim}. The main step of the analysis is performed in \S \ref{sectreform} where the dispersive boundary layer is constructed and the reformulation \eqref{WE1terND_reform_intro} of the problem is derived. This reformulation is used in \S \ref{sectWP} to prove the local well-posedness of the initial-boundary value problem for the Boussinesq system \eqref{Bouss1ND_int} with generating boundary condition \eqref{CB} which, to our knowledge, was not known so far. A discretization of the reformulation \eqref{WE1terND_reform_intro} is then proposed in \S \ref{sectdisc}; for the sake of clarity, it is based on the standard Lax-Friedrichs scheme. It is not possible to recover the missing boundary data using Riemann invariants as in the hyperbolic case, but the nonlocality of the operator $R_1$ allows us to express this missing information in terms of interior values. We insist on the fact that our numerical treatment of the generating boundary condition does not increase the computational time compared to simple boundary conditions (periodic, physical well, etc.), contrary to the previously used approaches mentioned in the introduction.

Finally, we provide in Section \ref{sectnumval} several numerical computations showing the accuracy of our numerical scheme. Our validation method is first presented in \S \ref{sectvalmet}; it consists in computing a reference solution in a large domain $[-L,L]$ with a very refined mesh, and to use the values of the water elevation at $x=0$ provided by this solution as a generating boundary condition for computations on the small domain $[0,L]$. The accuracy of this new solution  is measured by comparing it with the reference solution. A first example is provided in \S \ref{refined} in a situation where both incoming and outgoing waves are present. In \S \ref{soliton} we show that the Boussinesq system \eqref{Bouss1ND_int} admits solitary waves, and that we are able to generate them with good accuracy using the corresponding generating boundary condition. We then provide in \S \ref{sinus} another example, relevant for applications to coastal oceanography \cite{BMF,RF,LannesMarche}, which consists in the generation of  a sinusoidal wave train.

\subsection{Notations}
For the numerical computations, the computational domain $[0,L]$ is discretized using a uniform grid: 
$$ x_0 = 0, x_1 = \delta_x, \dots , x_{i} =i \delta_x, \dots, x_{n_x-1} = (n_x-1) \delta_x, x_{n_x} = L,$$
with $\delta_x = \displaystyle{\frac{L}{n_x}}$. 
The time step is denoted $\delta_t$.
The variables $\zeta_i^n$ and $q^n_i$ denote the values of the numerical solution for $\zeta$ and $q$ at the time $ n \, \delta_t$ and at the location $x_i$.
Generally speaking, the subscript $i$ and the  superscript $n$ indicate respectively a discretization at the location  $x_i$ and at the time  $ n \, \delta_t$.

%


\section{The nonlinear shallow water equations}\label{secthyp}
We recall that the nonlinear shallow water (or Saint-Venant) equations are a system of equations coupling the surface elevation $\zeta$ above the rest state to the horizontal discharge $q$,
\begin{equation}\label{WE1ter_bis}
\begin{cases}
\dt \zeta +\dx q=0,\\
\dt q+\dx \big(\frac{1}{2}gh^2+\frac{1}{h}q^2\big)=0,
\end{cases}
\qquad (h=H_0+\zeta).
\end{equation}
This system of equations is complemented by the initial and boundary conditions
\begin{equation}\label{IBC}
(\zeta,q)(t=0,x)=(\zeta^0,q^0)(x),\qquad
\zeta(t,x=0)=f(t).
\end{equation}

For the sake of completeness and as a basis for comparisons with the dispersive (Boussinesq) case, we briefly recall here how this problem can be handled numerically.
\subsection{The Riemann invariants}
The nonlinear shallow water equations \eqref{WE1ter_bis} can be written under an equivalent quasilinear form by introducing the vertically averaged horizontal velocity $u$,
$$
u=\frac{q}{h}\qquad \mbox{ with }\quad h=H_0+\zeta.
$$
The resulting system of equations on $(\zeta,u)$ is given by
\begin{equation}\label{WE1}
\begin{cases}
\dt \zeta +\dx (hu)=0,\\
\dt u+g\dx \zeta +u\dx u=0
\end{cases}
\end{equation}
or, in more condensed form,
\begin{equation}
\label{WE1bis}
\dt U+A(U)\dx U=0\quad \mbox{ with }\quad U=(\zeta,u)^T\quad\mbox{ and }\quad A(U)=\left(\begin{array}{cc} u & h \\g & u\end{array}\right). 
\end{equation}
The matrix $A(U)$ is diagonalizable with eigenvalues $\lambda_+(U)$ and $-\lambda_-(U)$  and associated left-eigenvectors ${\bf e}_\pm(U)$ given by
$$
\lambda_\pm(U)=\pm u + \sqrt{gh} \quad \mbox{ and }\quad {\bf e}_\pm(U)=\big(\sqrt{\frac{g}{h}},\pm1\big)^T.
$$
Taking the scalar product of \eqref{WE1bis} and ${\bf e}_\pm(U)$, we obtain 
$$
\big(\sqrt{\frac{g}{h}}\dt h\pm\dt u\big) \pm (\pm u + \sqrt{gh})\big(\sqrt{\frac{g}{h}}\dx h\pm\dx u\big)=0.
$$
This leads us to introduce the Riemann invariants $R_\pm$ as
\begin{equation}\label{Riemann_inv}
R_\pm(U):= 2\big(\sqrt{gh}-\sqrt{gH_0})\pm u,
\end{equation}
which satisfy the transport equations
\begin{equation}\label{transport}
\dt R_++\lambda_+(U)\dx R_+=0,\qquad \dt R_- -\lambda_-(U)\dx R_-=0.
\end{equation}
These Riemann invariants play a central role in the numerical resolution of the IBVP \eqref{WE1ter_bis}-\eqref{IBC} presented below.
\subsection{The discrete equations}
Writing $U=(\zeta,q)^T$, we first write \eqref{WE1ter_bis}  in the condensed form,
\begin{equation}\label{WE2}
\dt U+\dx \big(F(U) \big)=0  \quad \mbox{ with } \quad  F=\big(q,\frac{1}{2}g(h^2-H_0^2)+\frac{1}{h}q^2\big)^T,
\end{equation}
for which a finite volume type discretization gives
\begin{equation}\label{FV}
\frac{U_i^{n+1}-U_i^n}{\delta_t}+\frac{1}{\delta_x}(F^n_{i+1/2}-F^n_{i-1/2})=0,\qquad i\geq 1,
\end{equation}
the choice of $F^n_{i+1/2}$ depending on the numerical scheme. Our focus here being on explaining how to handle the boundary condition \eqref{IBC}, we consider here the most simple case of the Lax-Friedrichs scheme where the discrete flux is given by
\begin{equation}\label{LF}
F^n_{i-1/2}=\frac{1}{2}(F^n_{i}+F^n_{i-1})-\frac{\delta_x}{2\delta_t}(U_i^n-U_{i-1}^n),\qquad i\geq 1,
\end{equation}
with $F^n_i=F(U^n_i)$. For $i=1$, this equation involves $U^n_0$  that we need to express in terms of $U^n=(U^n_i)_{1\leq i }$ and the initial-boundary condition \eqref{IBC}, which, in discrete form, reads
\begin{equation}\label{IBCdisc}
(\zeta^0_i,q^0_i)=(\zeta^0,q^0)(x_i) \quad (i\geq 1),\qquad \zeta^n_0=f^n:=f(t^n),
\end{equation}
with $t^n:= n\delta_t$; this is done in the following section.
\subsection{Data on the water depth on the left boundary}\label{sectDATA}
For $i=1$, the flux $F_{1/2}$ requires the knowledge of $U^n_0=(\zeta^n_0,q^n_0)$.
From the initial-boundary condition \eqref{IBCdisc}, one takes
$$
\zeta^n_0=f^n,
$$
but we need to determine $q_0^n$, which can be deduced from the knowledge of $R^n_{\pm,0}:=R_\pm(t^n,0)$.  From \eqref{Riemann_inv} one gets indeed
$$
q=\frac{h}{2}(R_+-R_-) \quad \mbox{ and }\quad R_++R_-=4\big(\sqrt{gh}-\sqrt{gH_0})
$$
and therefore
$$
q=h\big(2(\sqrt{gh}-\sqrt{gH_0})-R_-\big).
$$
Evaluating this relation at $x=0$ provides an expression for the trace $\underline{q}=q_{\vert_{x=0}}$ in terms of the boundary data $f=\zeta_{\vert_{x=0}}$ and of the trace of the outgoing Riemann invariant $R_-$,
\begin{equation}\label{qhyp}
\underline{q}=(H_0+f)\big(2\big(\sqrt{g(H_0+f)}-\sqrt{gH_0}\big)-{R_-}_{\vert_{x=0}}\big)
\end{equation}
and at the discrete level, we get  at $t=t^n$
\begin{equation}\label{qn0}
q^n_0=(H_0+f^n)\big(2(\sqrt{g(H_0+f^n)}-\sqrt{gH_0})-R_{-,0}^n\big).
\end{equation}
Therefore, we just need to determine $R_{-,0}^n$ in order to determine $q_0^n$. 
We use the characteristic equation \eqref{transport} satisfied by $R_-$; after discretization, this gives
$$
\frac{R_{-,0}^{n}-R^{n-1}_{-,0}}{\delta_t}- \lambda^{n-1}_{-} \frac{R_{-,1}^{n-1}-R_{-,0}^{n-1}}{\delta_x}=0; 
$$
as in \cite{Marche},  $\lambda^{n-1}_{-}$ is computed as a linear interpolation between 
$ \lambda_{-}(U^{n-1}_0)$ 
and 
$ \lambda_{-}(U^{n-1}_1)$,
$$
\lambda^{n-1}_{-} =(1-\alpha^{n-1})\lambda_{-}(U^{n-1}_0)+ \alpha^{n-1}  \lambda_{-}(U^{n-1}_1)
$$

and $0 \leq \alpha^{n-1}  \leq 1$ computed such that $\lambda^{n-1}_{-}  \, \delta_t = \alpha^{n-1} \, \delta_x  $.
 Therefore
\begin{equation}
R_{-,0}^n=(1-\lambda^{n-1}_{-}  \frac{\delta_t}{\delta_x})R_{-,0}^{n-1}  + \lambda^{n-1}_{-} \frac{\delta_t}{\delta_x}R_{-,1}^{n-1}, \label{charac}
\end{equation}
which gives $R_{-,0}^n$ in terms of its values at the previous time step and in terms of interior points.\\


\section{The  Boussinesq equations}\label{sectBou}

We consider here the following Boussinesq-Abbott system \cite{Abbott,FBCR}, which includes the dispersive effects neglected by the nonlinear shallow water equations \eqref{WE1ter}
\begin{equation}\label{Bouss1ND}
\begin{cases}
\dt \zeta +\dx q =0,\\
(1-\frac{H_0^2}{3}\dx^2)\dt q+\dx \big(\frac{1}{2} g h^2 + \frac{1}{h}q^2\big)=0,\qquad (h=H_0+\zeta),
\end{cases}
\end{equation}
complemented with the initial and boundary conditions
\begin{equation}\label{IBC_Bouss}
(\zeta,q)(t=0,x)=(\zeta^0,q^0)(x),\qquad
\zeta(t,x=0)=f(t).
\end{equation}
The key step in our analysis is the reformulation of this IBVP into a system of two conservation laws with nonlocal flux and a source term accounting for the presence of a dispersive boundary layer, and whose coefficient is found through the resolution of a nonlinear ODE.\\
In order to make clearer the structure of the dispersive boundary layer, we work with a dimensionless version of \eqref{Bouss1ND_int}. The non-dimensionalization is performed in \S \ref{sectdim}. The reformulation of the equations is then derived in \S \ref{sectreform} and a numerical scheme based on this newly exhibited structure is proposed in \S \ref{sectdisc}.

\subsection{Dimensionless equations}\label{sectdim}

Denoting by $a$ the typical amplitude of the waves, by $L$ its typical horizontal scale,  we introduce the following dimensionless quantities, denoted with a prime,
$$
x'=\frac{x}{L}, \quad t'=\frac{t}{L/\sqrt{gH_0}}, \quad \zeta'=\frac{\zeta}{a},\quad u'=\frac{u}{\frac{a}{H_0}\sqrt{gH_0}},\quad h'=1+\eps \zeta'.
$$
Replacing in \eqref{Bouss1ND} (and omitting the primes for the sake of clarity), we obtain the dimensionless version of the Boussinesq equations
\eqref{WE1ter}
\begin{equation}\label{WE1terND}
\begin{cases}
\dt \zeta +\dx q =0,\\
(1-\frac{\mu}{3}\dx^2)\dt q+\dx \big(\frac{1}{2\eps} h^2 + \eps\frac{1}{h}q^2\big)=0,
\end{cases}
\end{equation}
where $\eps$ and $\mu$ are respectively called {\it nonlinearity} and {\it shallowness} parameters and defined as
$$
\eps=\frac{a}{H_0},\qquad \mu=\frac{H_0^2}{L^2};
$$
the Boussinesq equations are derived in the shallow water, weakly nonlinear regime characterized by the assumptions
\begin{equation}\label{asssmall}
\mu \ll 1\quad\mbox{ and }\quad \eps=O(\mu).
\end{equation}
Under these smallness assumptions, the Boussinesq model \eqref{WE1terND} provides an approximation consistent with the full free surface Euler equations up to $O(\mu^2)$ and the convergence error is of order $O(\mu^2 t)$ for times of order $O(1/\eps)$ \cite{AL,Lannes_book,Lannes_SW}.  

\subsection{Reformulation of the equations}\label{sectreform}

Solving the equations \eqref{WE1terND} on the full line requires the inversion of the operator $(1-\frac{\mu}{3}\dx^2)$, which does not raise any difficulty. The situation is different here since we need to invert this operator on the half-line $(0,\infty)$, and we therefore need a boundary condition on $\dt q$ which is not directly at our disposal. Our strategy is, as in \cite{BLM} for the description of the interaction of a floating objects with waves governed by a Boussinesq model, to use the inverse of  the operator $(1-\frac{\mu}{3}\dx^2)$ with homogeneous Dirichlet boundary condition, and to construct the dispersive boundary layer due to the fact that  the boundary value $\underline{q}$  of $q$ is not equal to zero in general; we shall denote
$$
\underline{q}(t)=q(t,x=0),
$$
and we also need to define the Dirichlet and Neumann inverses of the operator $(1-\frac{\mu}{3}\dx^2)$.
\begin{definition}\label{defR}
We denote by $R_0$ and $R_1$ the inverse of the operator $(1-\frac{\mu}{3}\dx^2)$ with homogeneous Dirichlet and Neumann boundary conditions respectively,
$$
R_0:\begin{array} {lcl}
L^2(\R_+) &\to & H^2(\R_+)\\
g &\mapsto &  u,
\end{array}
\quad\mbox{ where }\quad
\begin{cases} (1-\frac{\mu}{3}\dx^2)u=g, \\
u(0)=0, \end{cases}
$$
and
$$
R_1:\begin{array} {lcl}
L^2(\R_+) &\to & H^2(\R_+)\\
g &\mapsto &  v,
\end{array}
\quad\mbox{ where }\quad
\begin{cases} (1-\frac{\mu}{3}\dx^2)v=g,\\
\partial_x v(0)=0. \end{cases}
$$
We also introduce the boundary operator $\underline{R}_1$ as
$$
\underline{R}_1:\begin{array} {lcl}
L^2(\R_+) &\to & \R\\
g &\mapsto &  (R_1 g)_{\vert_{x=0}}.
\end{array}
$$
\end{definition}
Recalling that the ODE
$$
Y-\frac{\mu}{3}Y''=g, \qquad Y(0)=Y_0
$$
admits a unique solution in $H^2(\R_+)$ given by
$$
Y(x)=(R_0 g)(x) +Y_0 \exp\big(-\frac{x}{\delta}\big)
\quad \mbox{ with }\quad
\delta=\sqrt{\frac{\mu}{3}},
$$
the second equation in \eqref{WE1terND} can be written equivalently under the form
\begin{equation}\label{reform1}
\dt q=-R_0 \dx \big( \frac{1}{2\eps}h^2+\eps\frac{1}{h}q^2 \big)+ \dot{\underline{q}}\exp\big(-\frac{x}{\delta}\big).
\end{equation}
The last step is therefore to express $\dot{\underline{q}}$ in terms of the data $f={\zeta}_{\vert_{x=0}}$ of the problem. This is done in the following proposition.
\begin{proposition}\label{prop1}
If $(\zeta,q)$ are a smooth enough solution of  \eqref{WE1terND}, then the boundary value $\underline{q}$ of $q$ are related to  the boundary value $f=\zeta_{\vert_{x=0}}$ and to the interior value of $\zeta$ and $q$ by solving the ODE
$$
\dot{\underline{q}}-\frac{\varepsilon}{\delta}\frac{\underline{q}^2}{1+\varepsilon f}=\delta\ddot f+\frac{1}{\delta}(1+\frac{\eps}{2}f)f- \frac{1}{\delta} \underline{R}_1 \big( \frac{1}{2\eps}(h^2-1)+\eps\frac{1}{h}q^2 \big),
$$
where  $\underline{R}_1$ is the boundary operator introduced in Definition \ref{defR}.
\end{proposition}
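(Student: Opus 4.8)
The plan is to apply the Dirichlet inverse $R_0$ to the momentum equation so as to reach \eqref{reform1}, then to differentiate that identity once in $x$, take the trace at $x=0$, and close the relation using the mass conservation law together with the boundary condition $\zeta_{\vert x=0}=f$. The nonlocal term produced by this procedure is naturally expressed through $R_0$; it is turned into the boundary operator $\underline{R}_1$ of Definition~\ref{defR} by means of a commutation identity between $R_0$ and $R_1$.

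First I would set $\tF:=\frac{1}{2\eps}(h^2-1)+\eps\frac1h q^2$; since the flux in the second equation of \eqref{WE1terND} differs from $\tF$ only by the additive constant $\frac{1}{2\eps}$, that equation reads $(1-\frac{\mu}{3}\dx^2)\dt q=-\dx\tF$. Applying the explicit solution formula for $Y-\frac{\mu}{3}Y''=g$ recalled just before the statement, with $Y=\dt q$, $g=-\dx\tF$ and $Y_0=Y_{\vert x=0}=\dot{\underline q}$, gives exactly \eqref{reform1},
\[
\dt q=-R_0\dx\tF+\dot{\underline q}\,\exp(-x/\delta),\qquad \delta=\sqrt{\mu/3},
\]
the smoothness and decay of the solution ensuring that $\tF$ and $\dx\tF$ lie in the domains of $R_0$ and $R_1$.

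The heart of the argument is a commutation identity: if $v=R_1 g$, then $v'$ solves $(1-\frac{\mu}{3}\dx^2)v'=\dx g$ with $v'(0)=0$, hence $v'=R_0\dx g$, i.e. $\dx R_1=R_0\dx$ on smooth enough functions. Therefore $\dx(R_0\dx\tF)=\dx^2 R_1\tF=(R_1\tF)''$, and evaluating the defining equation $R_1\tF-\frac{\mu}{3}(R_1\tF)''=\tF$ at $x=0$ yields $\dx(R_0\dx\tF)_{\vert x=0}=\frac{1}{\delta^2}\big(\underline{R}_1\tF-\tF_{\vert x=0}\big)$. Differentiating \eqref{reform1} in $x$ and taking the trace at $x=0$ then gives
\[
\dx\dt q_{\vert x=0}=-\frac{1}{\delta^2}\big(\underline{R}_1\tF-\tF_{\vert x=0}\big)-\frac1\delta\dot{\underline q};
\]
on the other hand, the first equation of \eqref{WE1terND} gives $\dx\dt q=\dt\dx q=-\dt^2\zeta$, whose trace at $x=0$ equals $-\ddot f$ by the boundary condition. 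Equating these two expressions and solving for $\dot{\underline q}$ gives $\dot{\underline q}=\delta\ddot f-\frac1\delta\underline{R}_1\tF+\frac1\delta\tF_{\vert x=0}$.

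It then only remains to identify $\tF_{\vert x=0}$: since $h_{\vert x=0}=1+\eps f$, one gets $\tF_{\vert x=0}=(1+\frac\eps2 f)f+\eps\frac{\underline q^2}{1+\eps f}$, and moving the quadratic trace term to the left-hand side produces precisely the ODE in the statement. I expect the main obstacle to be the commutation identity $\dx R_1=R_0\dx$ and the ensuing reduction of $\dx(R_0\dx\tF)_{\vert x=0}$ to $\underline{R}_1$ acting on $\tF$; once this algebraic relation between the Dirichlet and Neumann inverses is in hand, everything else is bookkeeping, the only additional care being to check that the regularity of $(\zeta,q)$ justifies all the traces and the applications of $R_0$ and $R_1$.
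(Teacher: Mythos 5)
Your argument is correct and is essentially the paper's own proof: both rest on the commutation identity $R_0\dx=\dx R_1$ (established the same way), differentiate \eqref{reform1} in $x$, replace $\dt\dx q$ by $-\dt^2\zeta$ via the mass equation, and use $(1-\frac{\mu}{3}\dx^2)R_1=\mathrm{Id}$, i.e. $\dx^2 R_1=\frac{1}{\delta^2}(\mathrm{Id}-R_1)$ up to sign, to express the nonlocal term through $\underline{R}_1$ before evaluating the trace of ${\mathfrak f}(\zeta,q)$ at $x=0$. The only cosmetic difference is the order of operations: you take the trace at $x=0$ right after differentiating and then perform the operator reduction there, whereas the paper carries out the reduction at interior points and takes the trace last.
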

\begin{proof}
Differentiating \eqref{reform1} with respect to $x$, one obtains
\begin{equation}\label{reform2}
\dt \dx q=-\dx R_0 \dx \big( \frac{1}{2\eps}(h^2-1)+\eps\frac{1}{h}q^2 \big)- \frac{1}{\delta}\dot{\underline{q}}\exp\big(-\frac{x}{\delta}\big).
\end{equation}
\begin{lemma}
For all $g \in L^2(\R_+)$, the following identity holds,
$$
R_0 \dx g=\dx R_1 g.
$$
\end{lemma}
\begin{proof}[Proof of the lemma]
Just remark that if $v=R_1 g$, then one easily gets from the definition of $R_1$ that
$$
\begin{cases}
(1-\frac{\mu}{3}\dx^2) (\dx v)=\dx g,\\
(\dx v)(0)=0,
\end{cases}
$$
so that, by definition of $R_0$, one has $\dx v=R_0 \dx g$ (note that by classical variational arguments, $R_0$ is well defined as a mapping $\dx L^2(\R_+)\to H^1(\R_+)$).
\end{proof}
Using  the first equation of \eqref{WE1terND} to substitute $\dt\dx q=-\dt^2 \zeta$ and the lemma, one then deduces from \eqref{reform2} that
$$
-\dt^2 \zeta=-\dx^2 R_1  \big( \frac{1}{2\eps}(h^2-1)+\eps\frac{1}{h}q^2 \big)- \frac{1}{\delta}\dot{\underline{q}}\exp\big(-\frac{x}{\delta}\big).
$$
Remarking further that $-\dx^2=\frac{1}{\delta^2}(1-\frac{\mu}{3}\dx^2)-\frac{1}{\delta^2}$ and recalling that $(1-\frac{\mu}{3}\dx^2)R_1 =\mbox{Id}$, we obtain that
$$
\dt^2 \zeta= \frac{1}{\delta^2}(R_1-\mbox{Id})  \big( \frac{1}{2\eps}(h^2-1)+\eps\frac{1}{h}q^2 \big)+ \frac{1}{\delta}\dot{\underline{q}}\exp\big(-\frac{x}{\delta}\big).
$$
Taking the trace of this expression at $x=0$ then yields
$$
\ddot{f}+\frac{1}{\delta^2}(1+\frac{\eps}{2}f)f= \frac{1}{\delta^2}\big[R_1 \big( \frac{1}{2\eps}(h^2-1)+\eps\frac{1}{h}q^2 \big)\big]_{\vert_{x=0}}+ \frac{1}{\delta}\dot{\underline{q}}-\frac{\eps}{\delta^2}\frac{\underline{q}^2}{1+\varepsilon f},
$$
from which the result follows.
\end{proof}

Using once again the lemma to replace $R_0\dx$ by $\dx R_1$ in \eqref{reform1}, it follows from the above that the dimensionless Boussinesq equations \eqref{WE1terND} with initial and boundary conditions \eqref{IBC_Bouss} can be equivalently written under the form
\begin{equation}\label{WE1terND_reform}
\begin{cases}
\displaystyle \dt \zeta +\dx q =0,\\
\displaystyle \dt q+\dx R_1  {\mathfrak f}(\zeta,q)= \underline{{\mathcal Q}}(\underline{q},f,\ddot f, \zeta, q)\exp\big(-\frac{x}{\delta}\big),\\
\end{cases}
\end{equation}
where $\underline{q}=q_{\vert_{x=0}}$ and ${\mathfrak f}(\zeta,q)$ is the flux in the momentum equation for the nonlinear shallow water equations \eqref{WE1ter} in dimensionless variables,
$$
{\mathfrak f}(\zeta,q):=\frac{1}{2\eps}(h^2-1)+\eps \frac{1}{h}q^2,
$$
and
\begin{equation}\label{defODE}
\underline{{\mathcal Q}}(\underline{q},f,\ddot f, \zeta, q)=\frac{\varepsilon}{\delta}\frac{\underline{q}^2}{1+\eps f}+\delta \ddot f+\frac{1}{\delta}(1+\frac{\eps}{2} f)f- \frac{1}{\delta} \underline{R}_1 {\mathfrak f}(\zeta,q),
\end{equation}
with the initial condition
\begin{equation}\label{IBC_Bouss_bis}
(\zeta,q)(t=0,x)=(\zeta^0,q^0)(x)
\end{equation}
and the boundary condition
\begin{equation}\label{Bouss_reformBC}
\zeta(t,x=0)=f(t).
\end{equation}
\begin{remark}
Recalling that by definition of $R_1$, the trace of $\dx R_1 {\mathfrak f}$ vanishes at $x=0$, one can take the trace at $x=0$ in the second equation in \eqref{WE1terND_reform} to obtain the following evolution equation on $\underline{q}=q_{\vert_{x=0}}$,
\begin{equation}\label{qdisp}
\displaystyle \dot{\underline{q}}=\underline{{\mathcal Q}}(\underline{q},f,\ddot f, \zeta, q).
\end{equation}
This relation has to be compared to \eqref{qhyp} in the hyperbolic case, where $\underline{q}$ is given in terms of $f=\zeta_{\vert_{x=0}}$ and the trace of the outgoing Riemann invariant $R_-$. The mechanisms that allow to express $\underline{q}$ in terms of $f$ and interior values of $\zeta$ and $q$ are therefore completely different in the hyperbolic and in the dispersive cases: in the former, the decomposition into Riemann invariants is used to propagate information from the interior domain, while in the latter, this is done by using the non local nature of the operator $R_1$.
\end{remark}
\subsection{Well-posedness of the initial boundary value problem}\label{sectWP}

As said in the introduction, very few results exist regarding the local well-posedness result for Boussinesq systems, except in some special cases such as \cite{ADM,Xue}. To our knowledge, no result exist so far for the Boussinesq-Abbott system considered here. Our reformulation \eqref{WE1terND_reform}-\eqref{IBC_Bouss_bis} of this IBVP allows an easy proof of local well-posedness since it forms a simple ODE on $(\zeta,q)$ (here again, this is in strong contrast with the hyperbolic case where, of course,  the equations cannot be put recast as an ODE).
\begin{theorem}
Let $f\in C^2(\R_+)$, $n\in {\mathbb N}\backslash\{0\}$ and $(\zeta^0,q^0)\in H^n(\R_+)\times H^{n+1}(\R_+)$ be such that $\inf (1+\eps \zeta^0)>0$. Then there exist $T>0$ and a unique solution $(\zeta,q)\in C^1([0,T]; H^n(\R_+)\times H^{n+1}(\R_+))$ to \eqref{WE1terND_reform}-\eqref{IBC_Bouss_bis}. \\
If moreover ${\zeta^0}_{\vert_{x=0}}=f(0)$ and $-{\dx q^0}_{\vert_{x=0}}=\dot f(0)$, then the boundary condition \eqref{Bouss_reformBC} is also satisfied for all times.
\end{theorem}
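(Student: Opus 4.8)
The plan is to recognize \eqref{WE1terND_reform}--\eqref{IBC_Bouss_bis} as an ordinary differential equation in a Banach space and to invoke the Cauchy--Lipschitz (Picard--Lindel\"of) theorem. Write the system as $\frac{d}{dt}(\zeta,q)=\mathcal{F}(t,(\zeta,q))$ with
\[
\mathcal{F}\big(t,(\zeta,q)\big)=\Big(-\dx q,\ -\dx R_1{\mathfrak f}(\zeta,q)+\underline{{\mathcal Q}}\big(\underline q,f(t),\ddot f(t),\zeta,q\big)\,e^{-x/\delta}\Big),
\]
viewed as a map from an open subset of the Banach space $E^n:=H^n(\R^+)\times H^{n+1}(\R^+)$ to itself; the natural open set is $\mathcal{U}:=\{(\zeta,q)\in E^n:\ \inf_{\R^+}(1+\eps\zeta)>0\}$ (open since $\zeta\mapsto\inf(1+\eps\zeta)$ is continuous on $H^n(\R^+)\hookrightarrow L^\infty$ for $n\ge 1$). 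I would then check that $\mathcal{F}$ is continuous in $t$ and locally Lipschitz in $(\zeta,q)$ on $\mathcal{U}$, uniformly for $t$ in compact intervals, so that Cauchy--Lipschitz produces a unique maximal $C^1$ solution; since $\mathcal{U}$ is open and contains $(\zeta^0,q^0)$, the solution remains in $\mathcal{U}$ on a (possibly shorter) interval, which gives the asserted local solution. The only delicate point in this part is to verify that the nonlocal, nonlinear right-hand side really lands in $E^n$ and depends smoothly on the unknown; everything else is the abstract theorem.

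For that verification I would argue as follows. The map $q\mapsto-\dx q$ is bounded linear $H^{n+1}(\R^+)\to H^n(\R^+)$. Since $H^n(\R^+)$ is a Banach algebra for $n\ge 1$ and $\zeta\mapsto(1+\eps\zeta)^{-1}$ is smooth from $\mathcal{U}$ into $1+H^n(\R^+)$ (composition with a smooth scalar function, using that $h=1+\eps\zeta$ is bounded and bounded away from $0$ on $\mathcal{U}$), the flux $(\zeta,q)\mapsto{\mathfrak f}(\zeta,q)=\frac1{2\eps}(h^2-1)+\eps h^{-1}q^2$ is smooth from $\mathcal{U}$ into $H^n(\R^+)$. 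Next, by elliptic regularity for $(1-\frac\mu3\dx^2)v=g$, $\dx v(0)=0$ — or directly from the explicit kernel $\frac1{2\delta}\big(e^{-|x-y|/\delta}+e^{-(x+y)/\delta}\big)$ of $R_1$ — the operator $R_1$ is bounded $H^k(\R^+)\to H^{k+2}(\R^+)$ for all $k\ge 0$, so $\dx R_1{\mathfrak f}(\zeta,q)\in H^{n+1}(\R^+)$, smoothly in $(\zeta,q)$; it is precisely this net one-derivative gain of $\dx R_1$ that makes the scale $E^n$ invariant. Finally, for the source term, the trace maps $q\mapsto\underline q=q_{\vert_{x=0}}$ on $H^{n+1}(\R^+)$ and ${\mathfrak g}\mapsto\underline{R}_1{\mathfrak g}=(R_1{\mathfrak g})_{\vert_{x=0}}$ on $H^n(\R^+)$ are bounded (here $n\ge 1>\frac12$), $f$ and $\ddot f$ are continuous in $t$, and $1+\eps f(t)$ stays positive for $t$ small (it is the dimensionless water depth at the boundary), so $\underline{{\mathcal Q}}$, given by \eqref{defODE}, is continuous in $t$ and smooth in $(\zeta,q)\in\mathcal{U}$; multiplying by $e^{-x/\delta}\in H^{n+1}(\R^+)$ puts the source in $H^{n+1}(\R^+)$. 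Collecting these facts gives the required regularity of $\mathcal{F}$, and Picard--Lindel\"of finishes the existence and uniqueness statement.

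For the last assertion I would use a uniqueness argument for a scalar linear ODE. Assume the compatibility conditions and set $g(t):=\zeta(t,0)-f(t)$. Note first that $\zeta\in C^2([0,T];H^n(\R^+))$ — indeed $\dt\zeta=-\dx q$ and $q\in C^1([0,T];H^{n+1}(\R^+))$ — so, the trace being bounded and $f\in C^2$, $g\in C^2([0,T])$. The first equation gives $\dot g=-(\dx q)(t,0)-\dot f(t)$. Differentiating once more, substituting the second equation of \eqref{WE1terND_reform} in the form $\dt q=-\dx R_1{\mathfrak f}(\zeta,q)+\underline{{\mathcal Q}}\,e^{-x/\delta}$, using $\dx^2R_1=\frac1{\delta^2}(R_1-\mathrm{Id})$, and inserting the definition \eqref{defODE} of $\underline{{\mathcal Q}}$ — in effect running the computation in the proof of Proposition~\ref{prop1} backwards — the $\underline{R}_1{\mathfrak f}$ and $\ddot f$ contributions cancel and one is left with the homogeneous linear equation
\[
\ddot g=a(t)\,g,\qquad
a(t)=-\frac1{\delta^2}\Big(1+\frac\eps2\big(\zeta(t,0)+f(t)\big)\Big)+\frac{\eps^2\,q(t,0)^2}{\delta^2\,\big(1+\eps f(t)\big)\big(1+\eps\zeta(t,0)\big)},
\]
with $a$ continuous on $[0,T]$ (the denominators do not vanish since $\inf(1+\eps\zeta(t,\cdot))>0$ and $1+\eps f>0$). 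The compatibility conditions give exactly $g(0)=0$ and $\dot g(0)=-(\dx q^0)(0)-\dot f(0)=0$, hence $g\equiv 0$ on $[0,T]$ by uniqueness for this ODE, i.e. $\zeta(t,x=0)=f(t)$. I expect the main obstacle of the whole proof to be this algebraic cancellation: it is what the definition \eqref{defODE} of $\underline{{\mathcal Q}}$ is engineered to produce, and checking it cleanly amounts to carefully re-tracing the derivation of Proposition~\ref{prop1}; the existence part itself is, by design of the reformulation, a routine application of the Banach-space ODE theorem.
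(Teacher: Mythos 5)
Your proposal is correct and follows essentially the same route as the paper: Cauchy--Lipschitz for the ODE $\dt U={\mathcal F}(t,U)$ on $H^n(\R^+)\times H^{n+1}(\R^+)$ (with exactly the regularity checks on ${\mathfrak f}$, $\dx R_1$, the traces and the source that the paper invokes via product estimates), followed by a trace computation as in Proposition~\ref{prop1} to recover the boundary condition. Your explicit derivation of the linear equation $\ddot g=a(t)\,g$ for $g(t)=\zeta(t,0)-f(t)$ is simply the careful way of carrying out the step the paper states tersely as $\frac{d^2}{dt^2}(\zeta_{\vert_{x=0}})=\ddot f$ (which, as literally written, presupposes the conclusion), so it is a welcome filling-in of detail rather than a different argument.
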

\begin{remark}
The existence time furnished by the theorem depends on $\eps$ and $\mu$. The relevant time scale for the existence of the solution is $O(1/\eps)$ in the case of the full line \cite{AL,Lannes_book}. Proving that such a time scale is also achieved in our case would require much more effort and an in depth analysis of the dispersive boundary layer together with additional compatibility conditions. Such a study is performed in \cite{BLM} in the related problem of waves interaction with a floating object in the Boussinesq regime.
\end{remark}
\begin{proof}
To prove the first part of the theorem, it is enough to prove that \eqref{WE1terND_reform}-\eqref{IBC_Bouss_bis} is actually an ODE on $H^n(\R^+)\times H^{n+1}(\R^+)$ meeting the requirements of the Cauchy-Lipschitz theorem. \\
With $U=(\zeta,q)^{\rm T}$ we can write the equations under the form
$$
\dt U={\mathcal F}(t,U)
\quad\mbox{ with }\quad
{\mathcal F}(t,U)=\left(\begin{array}{c}
-\dx q\\
-\dx R_1{\mathfrak f}(\zeta,q)+\underline{\mathcal Q}(\underline{q},f,\ddot f, \zeta, q)\exp(-\frac{x}{\delta})\\
\end{array}\right).
$$
By standard product estimates, $(\zeta,q)\in H^n\times H^{n+1}\mapsto {\mathfrak f}(\zeta, q)\in H^n$ is regular in a neighborhood of $(\zeta^0,q^0)$; moreover, $\dx R_1$ maps $H^n$ into $H^{n+1}$ by definition of $R_1$. It follows easily that ${\mathcal F}(t,U)$ is continuous and locally Lipschitz with respect to the second variable, so that we can apply Cauchy-Lipschitz theorem.\\
We now need to check that  $\zeta(t,0)=f(t)$ for all time. In order to do so, one computes from the first equation in  \eqref{WE1terND_reform} that $\dt^2 \zeta=-\dt \dx q$. Using the second equation to compute $\dt \dx q$ and taking the trace at $x=0$ one gets (proceeding as in the proof of Proposition \ref{prop1}) that
$$
\frac{d^2}{dt^2} (\zeta_{\vert_{x=0}})=\ddot f +\frac{1}{\delta^2} \Big( {\mathfrak f}(f,\underline{q})-{\mathfrak f}(\zeta_{\vert_{x=0}},\underline{q})\Big).
$$
This can be seen as a second order non-autonomous ODE on $\zeta_{\vert_{x=0}}$ with a right-hand side that is locally Lipschitz with respect to  $\zeta_{\vert_{x=0}}$. There is therefore a unique solution to this ODE satisfying the initial conditions $\zeta_{\vert_{x=0}}(0)=f(0)$ and $\frac{d}{dt}(\zeta_{\vert_{x=0}})(0)=-\dx q^0 (0)=\dot f(0)$. This solution is obviously given by $\zeta_{\vert_{x=0}}=f$, so that the proof is complete.
\end{proof}
\subsection{Discretization of the equations}\label{sectdisc}

The goal of this section is to derive a numerical scheme to solve the initial boundary value problem \eqref{WE1terND_reform}-\eqref{Bouss_reformBC}.

\subsubsection{A discrete version of the operators $R_1$ and $\underline{R}_1$}\label{sectdiscR}

We still denote by $R_1$ the discrete inverse of the operator $(1-\frac{\mu}{3}\dx ^2)$ with homogeneous Neumann condition at the boundary. We use here a standard centered second order finite difference approximation for the discretization of $\dx^2$. More precisely, if $F=(f_i)_{i\geq 1}$, we denote by $R_1 F$ the vector $R_1 F=V$ where $V=(v_i)_{i\geq 1}$ is given by the resolution of the equations
$$
v_i -\frac{\mu}{3}\frac{v_{i+1}-2 v_i +v_{i-1}}{\delta_x^2}=f_i,\qquad i\geq 2
$$
while, for $i=1$ the Neumann boundary condition is taken into account as follows,
$$
v_1 -\frac{\mu}{3}\frac{v_{2}- v_1}{\delta_x^2}=f_1.
$$
Similarly, we still denote by $\underline{R}_1$ the discrete version of the boundary operator $\underline{R}_1$, naturally defined by the second order approximation
$$
\underline{R}_1 F=v_1.
$$
\subsubsection{A finite volume scheme with nonlocal flux}

We first rewrite  \eqref{WE1terND_reform}  in the condensed form
\begin{equation}\label{WE2_B}
\dt U+\dx \big({\mathfrak F}_\mu(U) \big)=S
\end{equation}
with $U=(\zeta,q)^T$ and 
\begin{equation}\label{deftzeta}
{\mathfrak F}_\mu(U)=\big(q, {\mathfrak f}_\mu(U)\big)^T,
\end{equation}
and where
$$
{\mathfrak f}_\mu(U)=R_1 {\mathfrak f}(U)
\quad\mbox{ and }\quad
{\mathfrak f}(U):=\frac{1}{2\eps}(h^2-1)+\eps \frac{1}{h}q^2,
$$
(${\mathfrak f}(U)$ is the flux in the momentum equation for the nonlinear shallow water equations \eqref{WE1ter} in dimensionless variables). The flux in \eqref{WE2_B} is therefore a nonlocal operator with respect to $U$. The source term $S$ in \eqref{WE2_B} is given by
\begin{equation}\label{source}
S=\left(\begin{array}{cc} 0 \\
 \underline{{\mathcal Q}}(\underline{q},f,\ddot f, \zeta, q)\exp(-\frac{x}{\delta})
 \end{array}\right),
\end{equation}
where we recall that $\underline{{\mathcal Q}}(\underline{q},f,\ddot f, \zeta, q)$ is defined in \eqref{defODE}.\\
Using a  finite volume type discretization for the \eqref{WE2_B} and a standard Euler scheme for the ODE on $\underline{q}$, we obtain the following general discretization of the Boussinesq system \eqref{WE1terND_reform},
\begin{equation}\label{schema}
\displaystyle \frac{U_i^{n+1}-U_i^n}{\delta_t}+\frac{1}{\delta_x}({\mathfrak F}^n_{\mu,i+1/2}-{\mathfrak F}^n_{\mu,i-1/2})=S^n_i, \qquad i\geq 1,\quad n\geq 0,
\end{equation}
where $U^n=(\zeta^n,q^n)^{ T}=(\zeta_i^n,q_i^n)_{i\geq 1}^{ T}$ and the source term $S^n_i$ is being given by
\begin{equation}\label{source_disc}
S^n_i=\left(\begin{array}{cc} 0 \\
 \underline{{\mathcal Q}}(q_0^n,f^n,\ddot f^n, \zeta^n, q^n)\exp(-\frac{x_i}{\delta})
 \end{array}\right)
\end{equation}
(note that the definition for the discretized version of $\underline{\mathcal Q}$ can straightforwardly be deduced from \eqref{defODE} along the lines of \S \ref{sectdiscR}; see also Remark \ref{remQ} below). The source term involves the quantity $q_0^n$ which cannot be computed by induction through \eqref{schema} since in \eqref{schema}, one assumes that $i\geq 1$. However, a direct discretization of \eqref{qdisp} yields
\begin{equation}\label{schematrace}
\displaystyle \frac{{q}_0^{n+1}-{q}_0^n}{\delta_t}= \underline{{\mathcal Q}}({q}_0^n,f^n,\ddot f^n, \zeta^n, q^n), \qquad n\geq 0.
\end{equation}
It remains of course to explain how to compute the discrete fluxes ${\mathfrak F}_{\mu,i+1/2}$. As above for the nonlinear shallow water equations, we consider here the simplest case of  the Lax-Friedrichs scheme where the discrete flux is given by
\begin{equation}\label{tLF}
{\mathfrak F}^n_{\mu,i-1/2}=\frac{1}{2}({\mathfrak F}^n_{\mu,i}+{\mathfrak F}^n_{\mu,i-1})-\frac{\delta_t}{2\delta_x}(U_i^n-U_{i-1}^n),
\end{equation}
where we write, when $i\geq 1$,
\begin{equation}\label{deffmu}
{\mathfrak F}^n_{\mu,i}=\big(q^n_i,{\mathfrak f}^{n}_{\mu,i}\big)^T\quad\mbox{ with }\quad {\mathfrak f}^n_{\mu}:=R_1 \big({\mathfrak f}(U^n_i)\big)_{i\geq 1},
\end{equation}
the discrete operator $R_1$ being constructed as in \S \ref{sectdiscR}.\\
When $i=0$, this definition is naturally adapted as follows, 
\begin{equation}\label{deffmu1}
{\mathfrak F}^n_{\mu,0}=\big({q}_0^n,{\mathfrak f}^{n}_{\mu,0}\big)^T\quad\mbox{ with }\quad {\mathfrak f}^n_{\mu,0}:=\underline{R}_1 \big({\mathfrak f}(U^n_i)\big)_{i\geq 1},
\end{equation}
the discrete boundary operator $\underline{R}_1$ being constructed as in \S \ref{sectdiscR} while ${q}_0^n$ is provided by  \eqref{schematrace}.

\begin{remark}\label{remQ}
The quantity $ \underline{{\mathcal Q}}(q_0^n,f^n,\ddot f^n, \zeta^n, q^n)$ that appears in the right-hand side of the momentum equation  in \eqref{schema} and in the discrete ODE \eqref{schematrace} for $q_0^n$  can be written using the notation \eqref{deffmu1} as
$$
\underline{{\mathcal Q}}(q_0^n,f^n,\ddot f^n, \zeta^n, q^n)=\frac{\varepsilon}{\delta}\frac{(q_0^n)^2}{1+\eps f^n}+\delta \ddot f^n+\frac{1}{\delta}(1+\frac{\eps}{2} f^n)f^n- \frac{1}{\delta} {\mathfrak f}_{\mu,0}^n.
$$
All these quantities are already known so that handling generating boundary condition can be done with no extra computational cost compared to, say, periodic boundary conditions.
\end{remark}

\section{Numerical validations}\label{sectnumval}

\subsection{The validation method}\label{sectvalmet}

Since the implementation of reflecting or periodic boundary conditions does not raise any problem for the Boussinesq equations \eqref{WE1terND} we compute first a solution $U^L=(\zeta^L,q^L)^T$ of the equations under consideration in a larger domain $[-L,L]$ until a final time $T_f$, with reflective  or periodic boundary conditions at both extremities, and with a non trivial initial condition. We then define a reference solution as the restriction of $U^L$ on $[0,L]$, and a boundary data $f$ as
$$
U^{\rm ref}=(\zeta^{\rm ref},q^{\rm ref})^T:= U^L_{\vert_{[0,L]}}
\quad\mbox{ and }\quad 
f(t):=\zeta^L(t,x=0).
$$
We then use the scheme presented in \S \ref{sectdisc} to compute the solution $U$ of the Boussinesq system \eqref{WE1terND} with initial data $U^0(x)=U^{\rm ref}(t=0,x)$ and boundary data $f$, and compare it with the reference solution $U^{\rm ref}$. 
We define in particular the errors ${\mathbf e^{\zeta}_{\delta_x}}(t)$ and ${\mathbf e^{q}_{\delta_x}}(t)$ as
\begin{equation}\label{approxerr}
{\mathbf e^{\zeta}_{\delta_x}}(t)=\Vert \zeta(t,\cdot)-\zeta^{\rm ref}(t,\cdot)\Vert_{L^\infty(0,L)}, 
\qquad
{\mathbf e^q_{\delta_x}}(t)=\Vert q(t,\cdot)-q^{\rm ref}(t,\cdot)\Vert_{L^\infty(0,L)},
\end{equation}
and we compute the overall errors $e^{\zeta}_{\delta_x}$ and $e^{q}_{\delta_x}$ on $[0,T_f]$ as
$$
e^{\zeta}_{\delta_x} = \Vert  {\bf e}^{\zeta}_{\delta_x}(.)\Vert_{L^\infty(0,T_f)}, \quad e^q_{\delta_x} = \Vert  {\bf e}^q_{\delta_x}(.)\Vert_{L^\infty(0,T_f)}. 
$$
The convergence order $p$ is computed with a least-squares linear regression, whose coefficient is plotted on the error curves.
%
%
%

\subsection{Propagation of gaussian initial conditions}\label{refined}

We recall that the Boussinesq equations \eqref{WE1terND} are derived under the smallness assumption \eqref{asssmall} on $\eps$ and $\mu$. We consider here the approximation error in  different cases,
$$
({\rm I})\quad  \eps=\mu=0.3, \qquad ({\rm II}) \quad  \eps=\mu=0.1,  \quad ({\rm III})\quad  \eps=\mu=0.01,
$$
the nonlinear and dispersive effect become more important when $\eps$ and $\mu$ respectively become larger; in particular, the configuration (${\rm I}$) is quite stiff and in the limit of the range of validity of the Boussinesq equations (for strong nonlinearities, one should rather work with the more complicated Serre-Green-Naghdi equations \cite{Lannes_book,Lannes_SW}).\\
The initial datum for $U^L$ in the larger domain is
\begin{align}
\zeta^{L}(t=0,x)&= e^{-6(x+0.1L)^2}+ e^{-6(x-0.3L)^2}; \label{CIzeta} \\
q^L(t=0,x) &=  e^{-6(x+0.1L)^2}- e^{-6(x-0.3L)^2}; \label{CIq}
\end{align}

The reference solution is  computed with the Lax-Friedrichs scheme with  non local flux introduced above, on the domain $[-L, L]$, with a very refined mesh: $n_x = 3600$, and a time step $\delta_t = 0.9 \delta_x$ in agreement with the CFL condition computed from the approximated velocities of the Riemann invariants. We take $L=5$.
We compute the numerical solution with the nonlocal Lax-Friedrichs scheme in the domain $[0,L]$, on coarser meshes: $n_x = 90,120,150,180,200,300,360,450$.
 The meshes are defined so that the points of the coarse meshes always coincide with the points of the finer mesh.
The boundary conditions at $x=0$ are taken into account by imposing the reference solution and its second-order time derivative approximated with the classical centered second-order scheme.
 
As the initial data is zero near the boundaries of the large domain, no special effort is necessary for the computation with the coarse mesh at the right boundary $x=L$  if the final time of the simulation is not too large.
We shall compare the solution over a time interval $t\in [0,2]$. The qualitative behavior of the solution is the following: each of the two gaussians decomposes into two waves roughly traveling at speed $1$ and $-1$ respectively. The gaussian located on the left being closer to the boundary $x=0$ of the small domain, this configuration is rich enough to contain the three main relevant cases,
\begin{enumerate}
\item The forcing $f$ corresponds to an essentially incoming wave. This is the situation that occurs for $t\sim 0.1$ (see Figure  \ref{both} left)
\item The forcing $f$ corresponds to the superposition of an outgoing and an incoming wave. This is the situation that occurs for $t\sim 1$ (see Figure  \ref{both} middle)
\item The forcing $f$ corresponds to an essentially outgoing wave. This is the situation that occurs for $t\sim 1.5$ (see Figure \ref{both} right)
\end{enumerate}
 \begin{figure}[!ht]
\centering
\begin{tabular}{ccc}
 \hspace{-25mm}  \includegraphics[height=45mm]{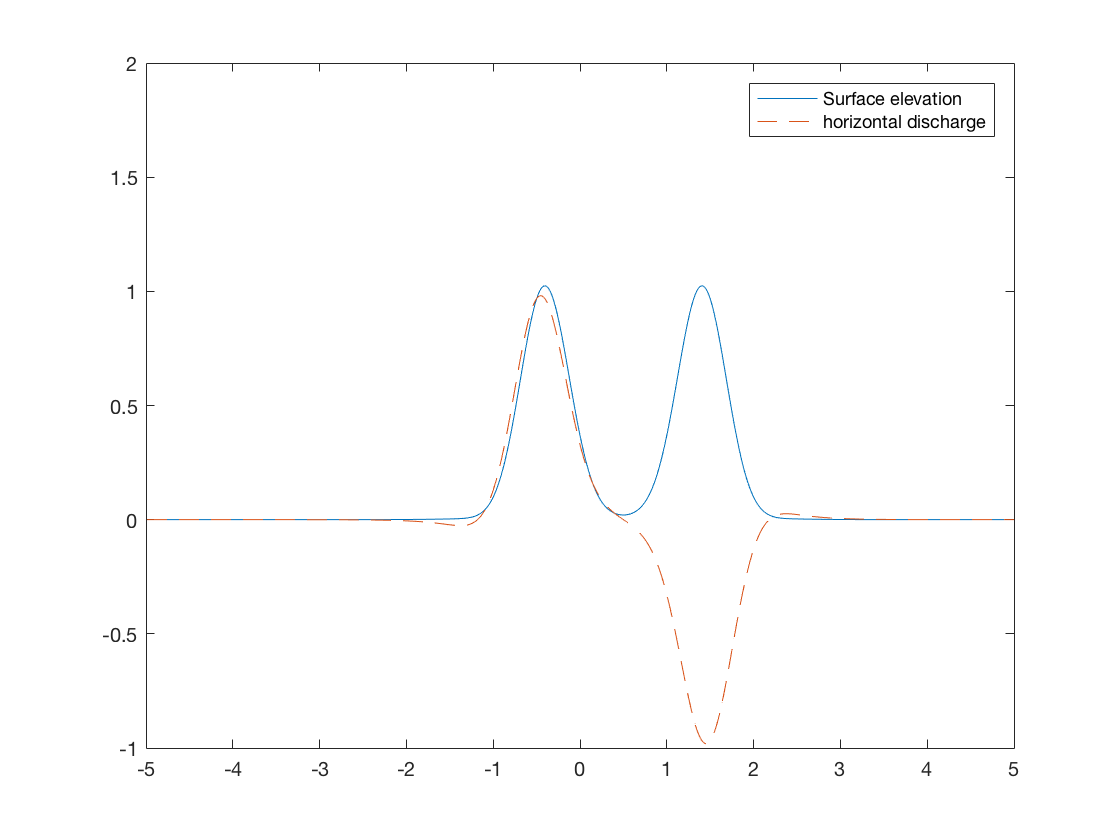}  & \includegraphics[height=45mm]{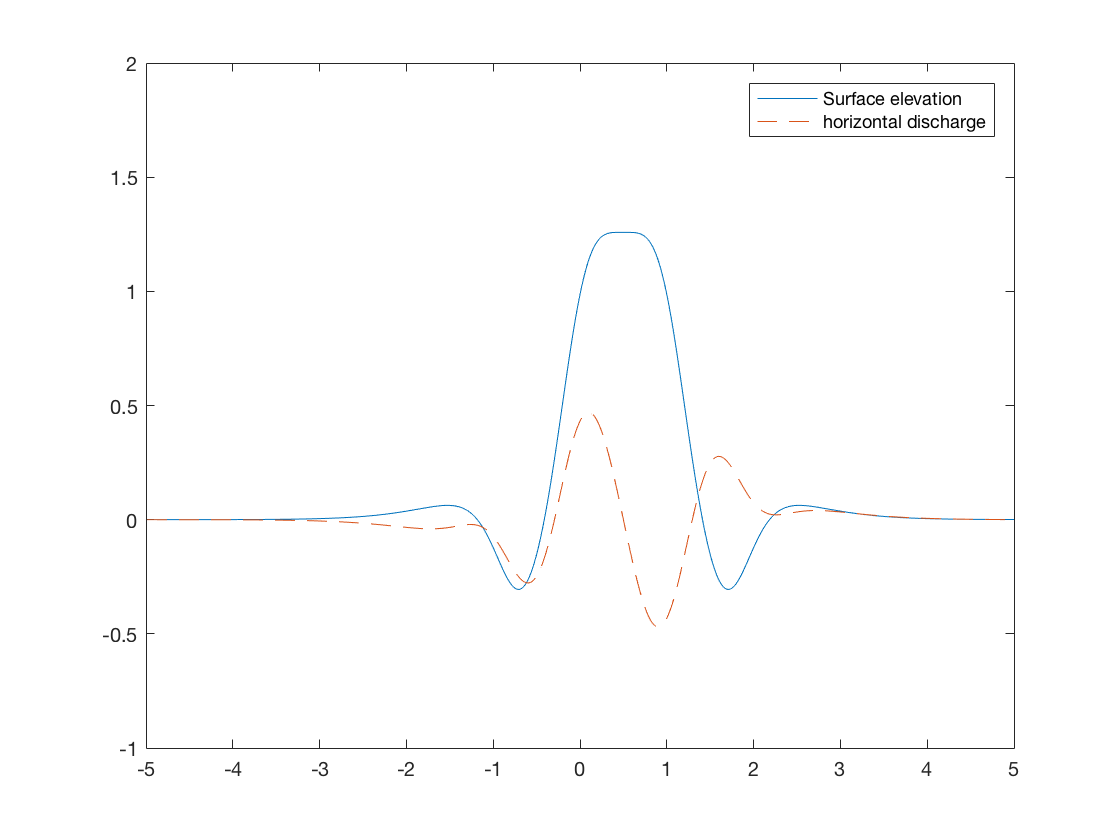} & \includegraphics[height=45mm]{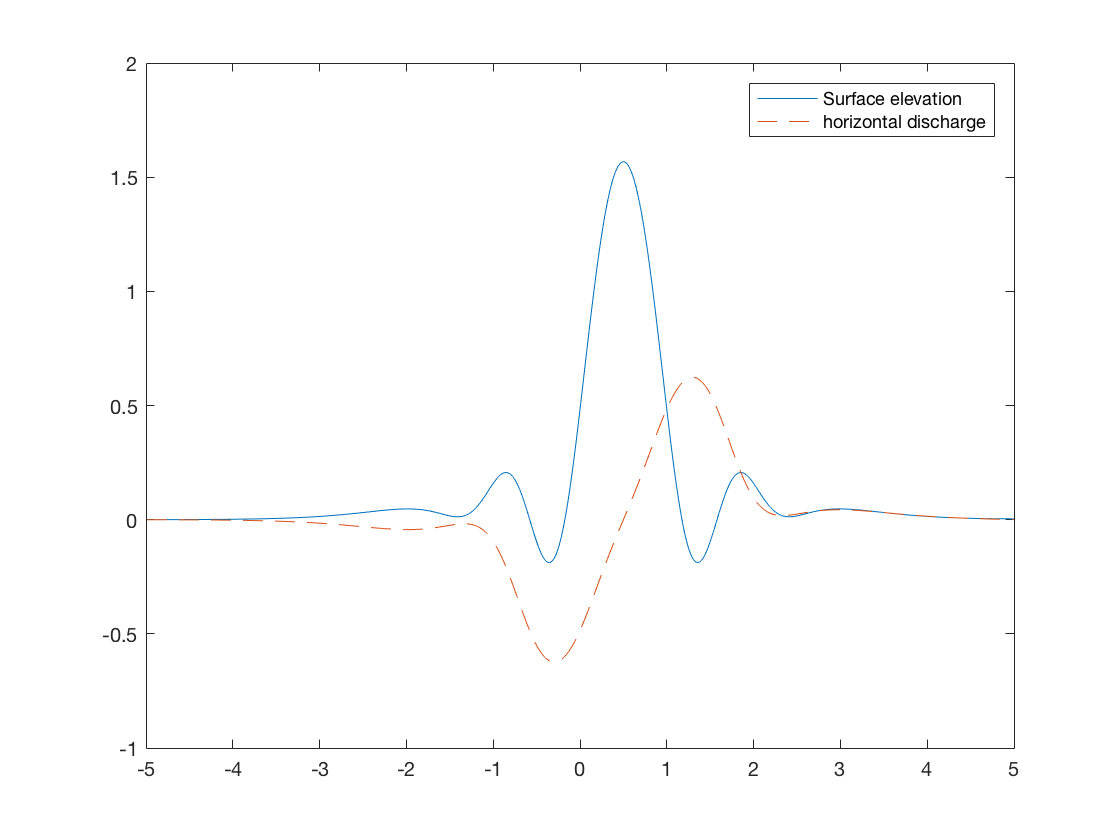}
\end{tabular}
\caption{Numerical results on large domain with $\delta_x = L/400$, with $L = 5$, $\mu = \eps = 0.3$, at times $T = 0.1, 1., 1.5$. } { \label{both}}
   \end{figure}

Numerical results for the initial condition (\ref{CIzeta})-(\ref{CIq}) are presented on Figures \ref{table1}, \ref{table2} and \ref{table3}, in logarithmic scale with the slope obtained from a linear regression. 
On Figure  \ref{table1}, corresponding to the case $\mu = \epsilon = 0.3$, the slope of the linear regression obtained with all error points is completed with the slope of the linear regression obtained with the four more refined error points.
Globally, a first-order convergence in space is observed for both variables. 

 \begin{figure}[!ht]
\centering
\begin{tabular}{cc}
   \includegraphics[height=50mm]{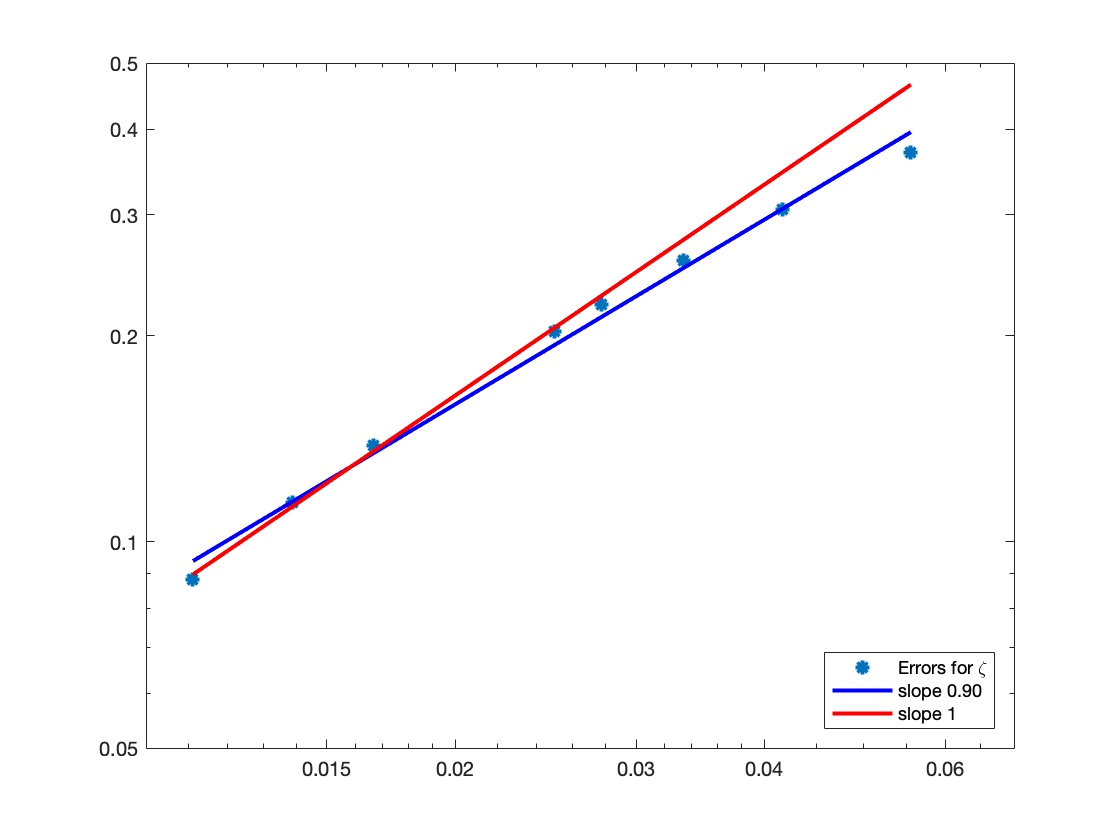}  & \includegraphics[height=50mm]{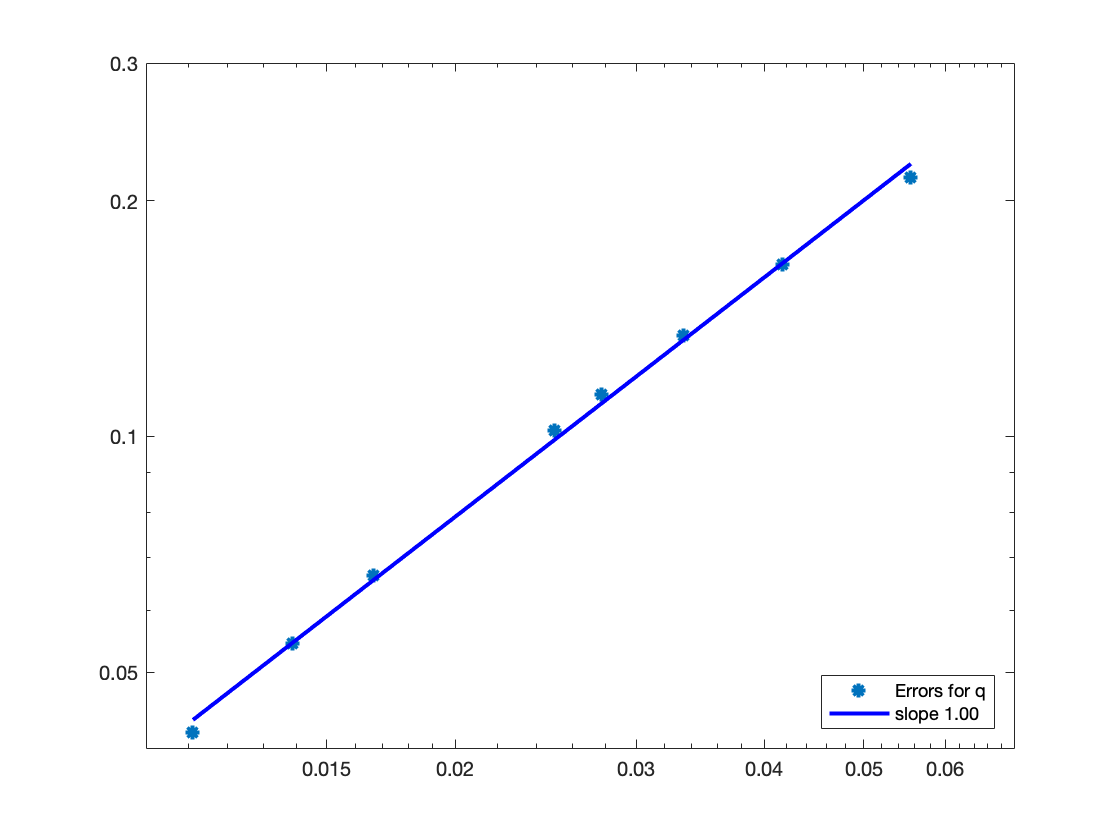}
\end{tabular}
  \caption{Convergence results for Boussinesq equations, $\mu = \eps = 0.3$.}
  \label{table1}
     \end{figure}

 \begin{figure}[!ht]
\centering
\begin{tabular}{cc}
   \includegraphics[height=50mm]{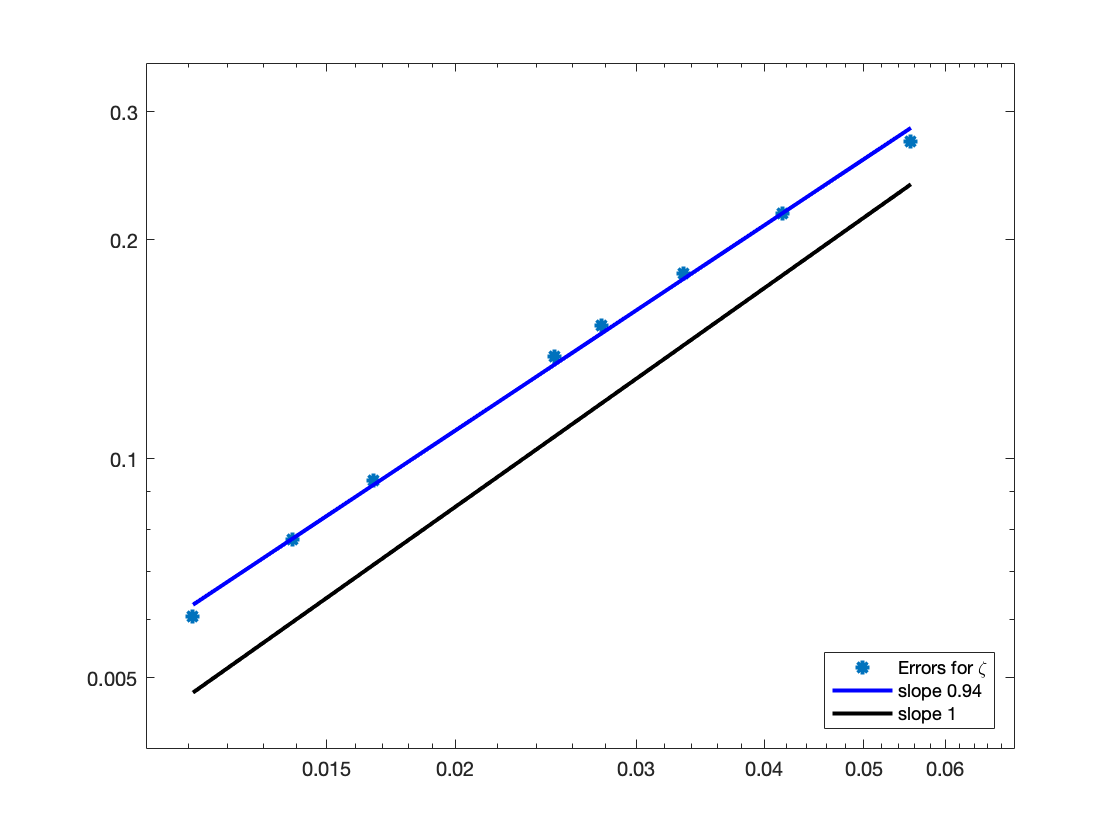}  & \includegraphics[height=50mm]{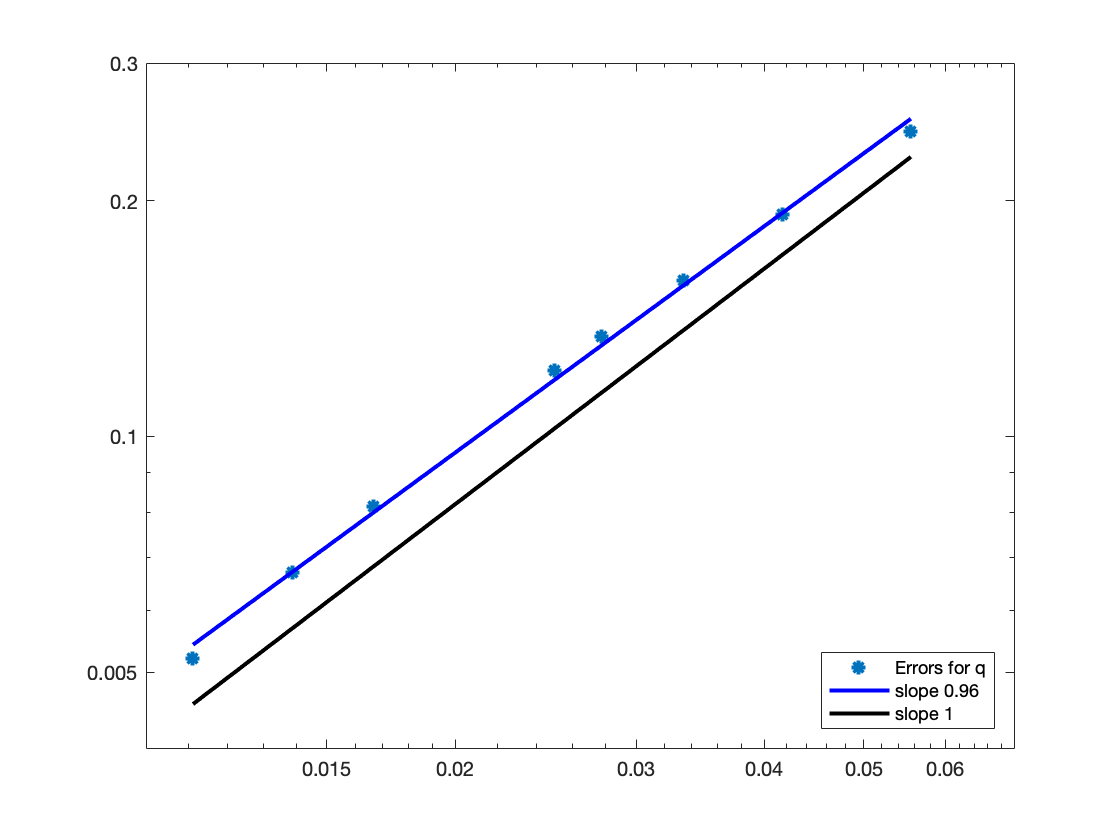}
\end{tabular}
  \caption{Convergence results for Boussinesq equations, $\mu = \eps = 0.1$.}
  \label{table2}
     \end{figure}

 \begin{figure}[!ht]
\centering
\begin{tabular}{cc}
   \includegraphics[height=50mm]{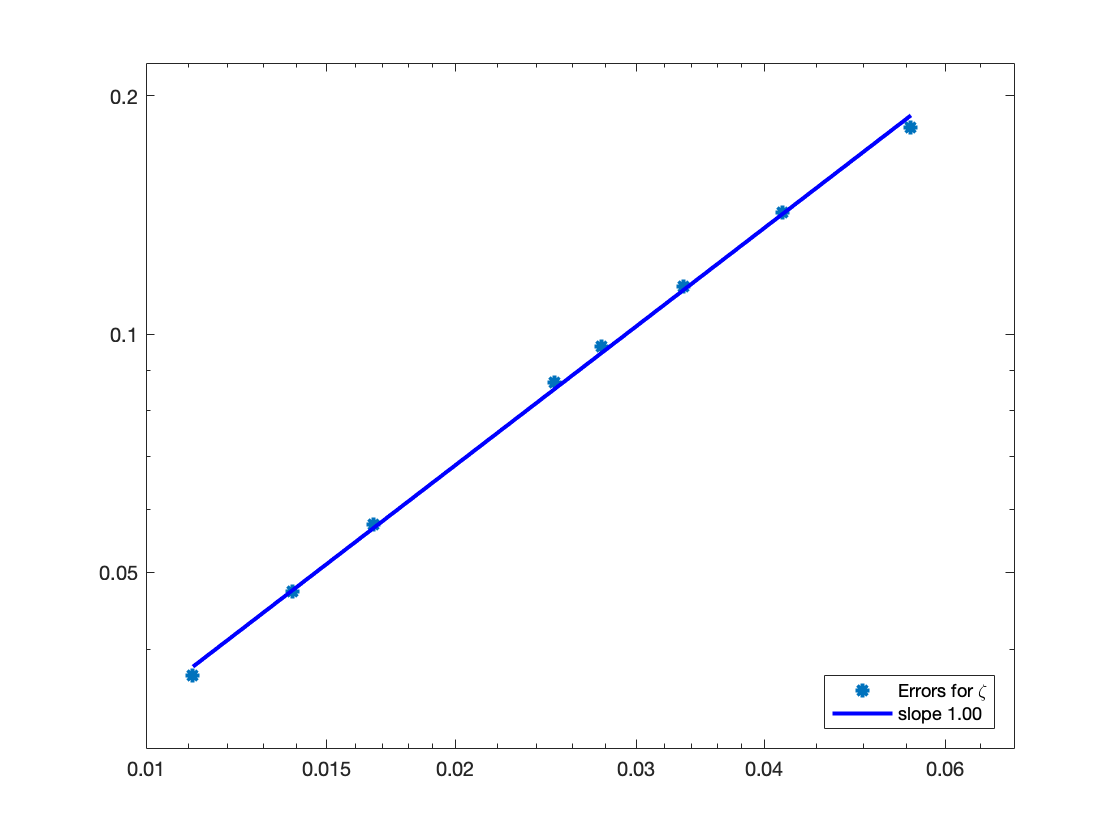}  & \includegraphics[height=50mm]{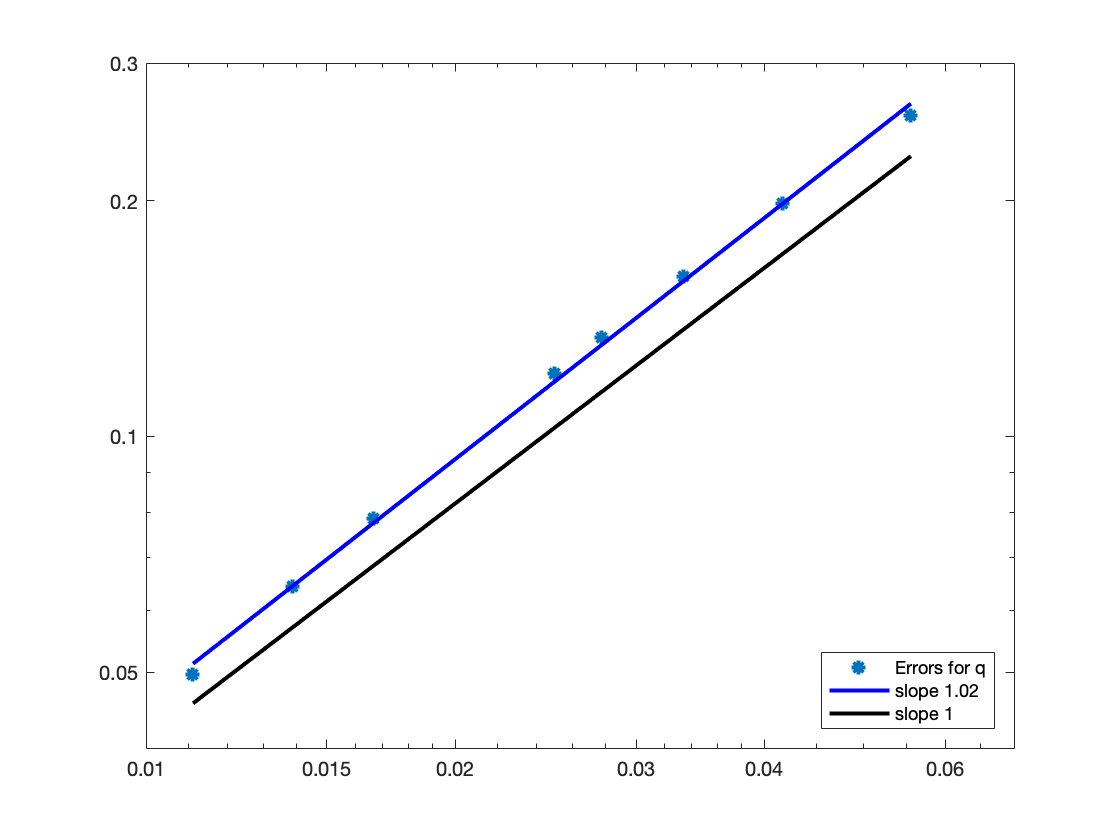}
\end{tabular}
  \caption{Convergence results for Boussinesq equations, $\mu = \eps = 0.01$.}
  \label{table3}
     \end{figure}

\subsection{Soliton propagation}\label{soliton}

We test here our scheme on the propagation of a solitary wave, which involves both nonlinear and dispersive effects.
The soliton for the non-linear Boussinesq system \eqref{WE1terND} is not explicit, but we  compute it by solving numerically a second-order differential equation that we obtain as follows. We look for a solution of the non-linear Boussinesq equations such that $\zeta(x,t) = \tilde{\zeta}(x-ct)$ and $q(x,t) = \tilde{q}(x-ct)$.
We inject this form in the first equation of (\ref{WE1terND})  and find, omitting the tilde symbol  for the sake of brevity
$$
q' = c \, \zeta'.
$$
Then we inject this relationship in the second equation of (\ref{WE1terND}), integrate in space, and we can write (imposing that $\zeta$ vanishes at infinity),
\begin{eqnarray}
- c^2 \frac{\zeta}{ 1 + \eps \zeta}  + \frac{c^2\, \mu}{3}  \zeta'' + \frac{\eps^2 \, \zeta^2+ 2 \, \eps \, \zeta}{2 \eps} &= 0. \label{EDOzeta}
\end{eqnarray}
Multiplying this equation by $\zeta'$ and using again that $\zeta$ tends to zero when $x$ tends to $\pm \infty$, we obtain 
$$
-\frac{c^2}{\eps} \Big(     \zeta - \frac{\ln (1+\eps \zeta) }{\eps} \Big)   + \frac{c^2 \mu}{6}  (\zeta')^2  + \frac{\eps}{2}  \frac{\zeta^3}{3} + \frac{\zeta^2}{2} = 0.
$$
Denoting $\zeta_{{\rm max}}$ the maximum value of $\zeta$ we can compute $c$ as a function of $\zeta_{{\rm max}}$ and $\eps$.
$$
c^2 =  \eps \frac{  \frac{\eps \zeta_{{\rm max}}^3}{6}  + \frac{\zeta_{{\rm max}}^2}{2}  }{  \zeta_{{\rm max}} - \frac{\ln(1+\eps \zeta_{{\rm max}})}{\eps}    }.
$$
 Once $c$ is computed, we solve the differential equation  (\ref{EDOzeta})  with a high order numerical method in order to obtain our reference solution.
 We  choose   $\zeta_{{\rm max}}=1$ and $\mu = \eps = 0.3$ or $0.1$.
We have checked that if we solve the Boussinesq system with this reference solution as an initial datum, with the nonlocal Lax-Friedrichs scheme and periodic boundary conditions, 
the numerical results show a first order convergence: see Figures  \ref{table_soliton1} and \ref{table_soliton2}.
The space steps $\delta_x$ were computed as $\delta_x = L/n_x$, with $n_x = 400, 600, 800, 1000, 1200, 1400, 1600, 1800, 2000$.

\begin{figure}[!ht]
\centering
\begin{tabular}{cc}
   \includegraphics[height=50mm]{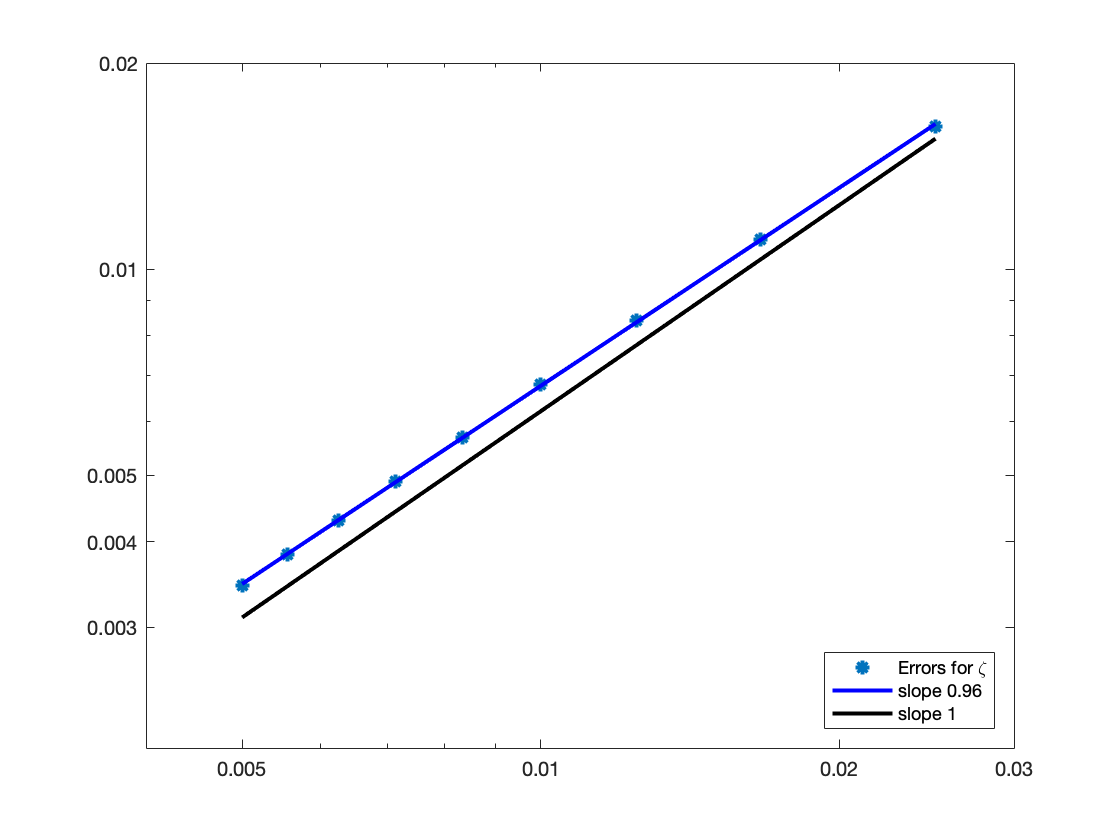}  & \includegraphics[height=50mm]{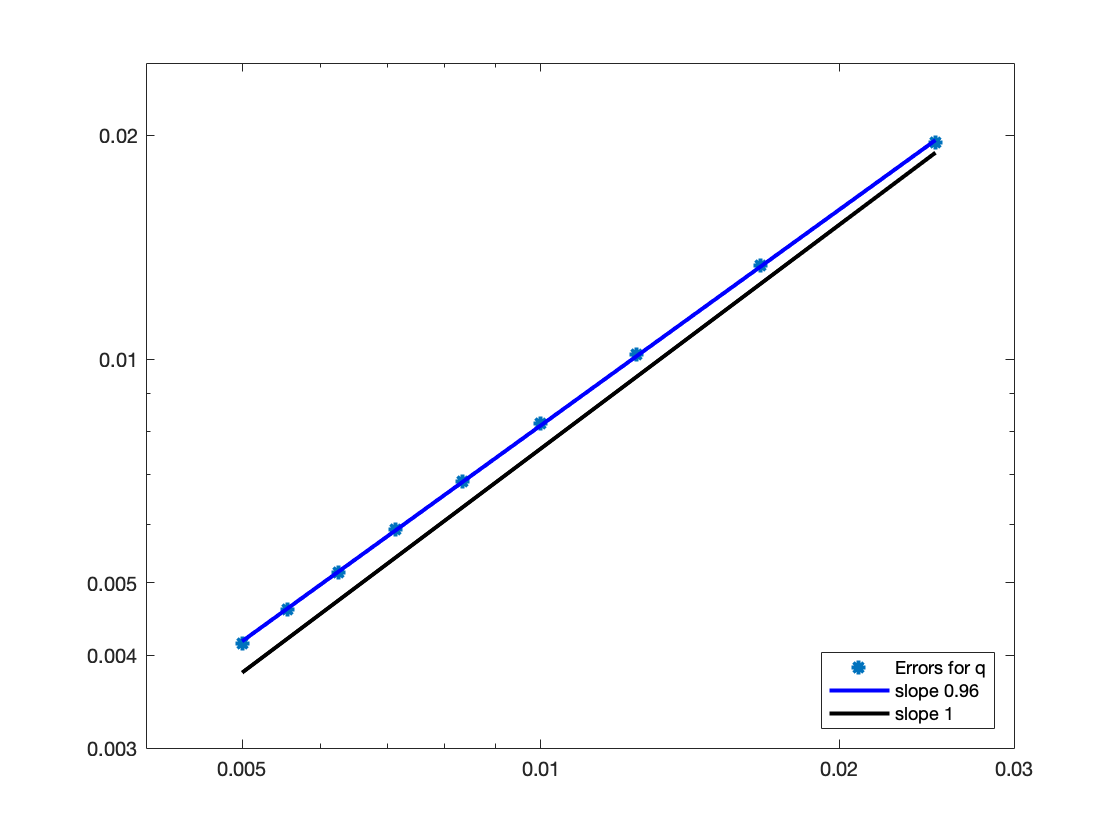}
\end{tabular}
   \caption{Validation of convergence of soliton propagation in the large domain, $L^{\infty}$ error, $\mu = \eps = 0.3$.}
  \label{table_soliton1}
     \end{figure}

 \begin{figure}[!ht]
\centering
\begin{tabular}{cc}
   \includegraphics[height=50mm]{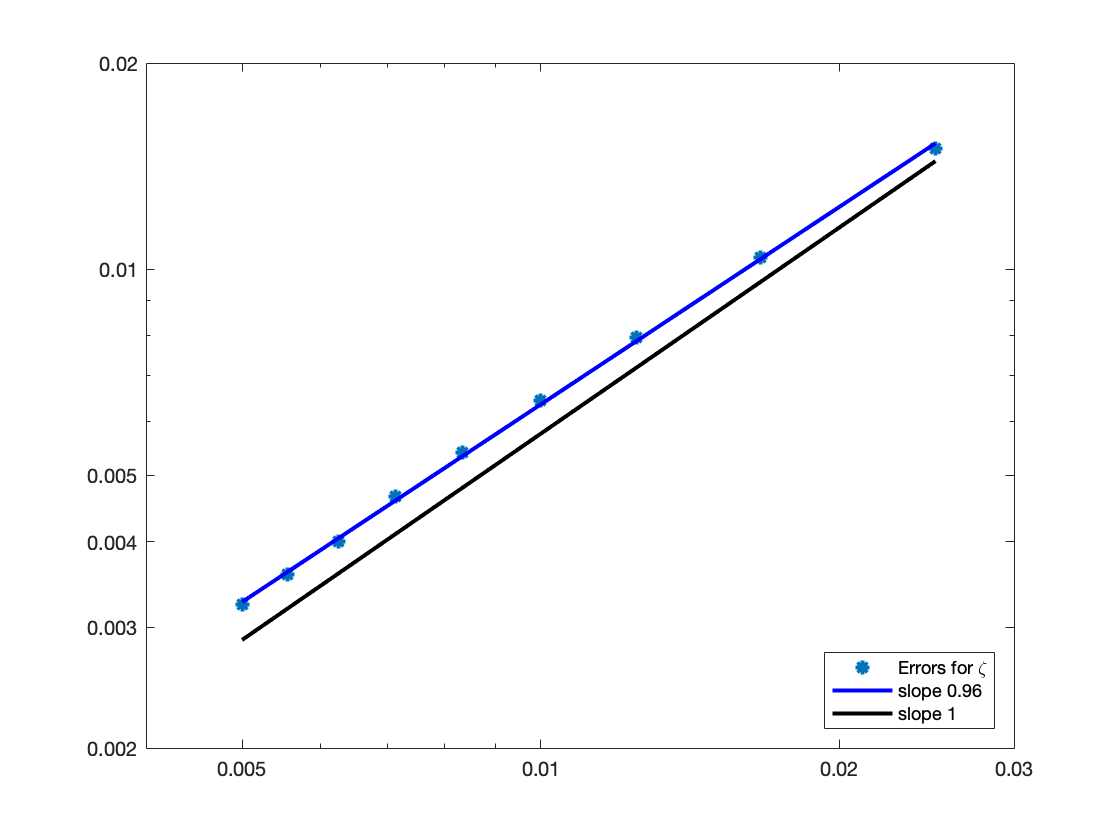}  & \includegraphics[height=50mm]{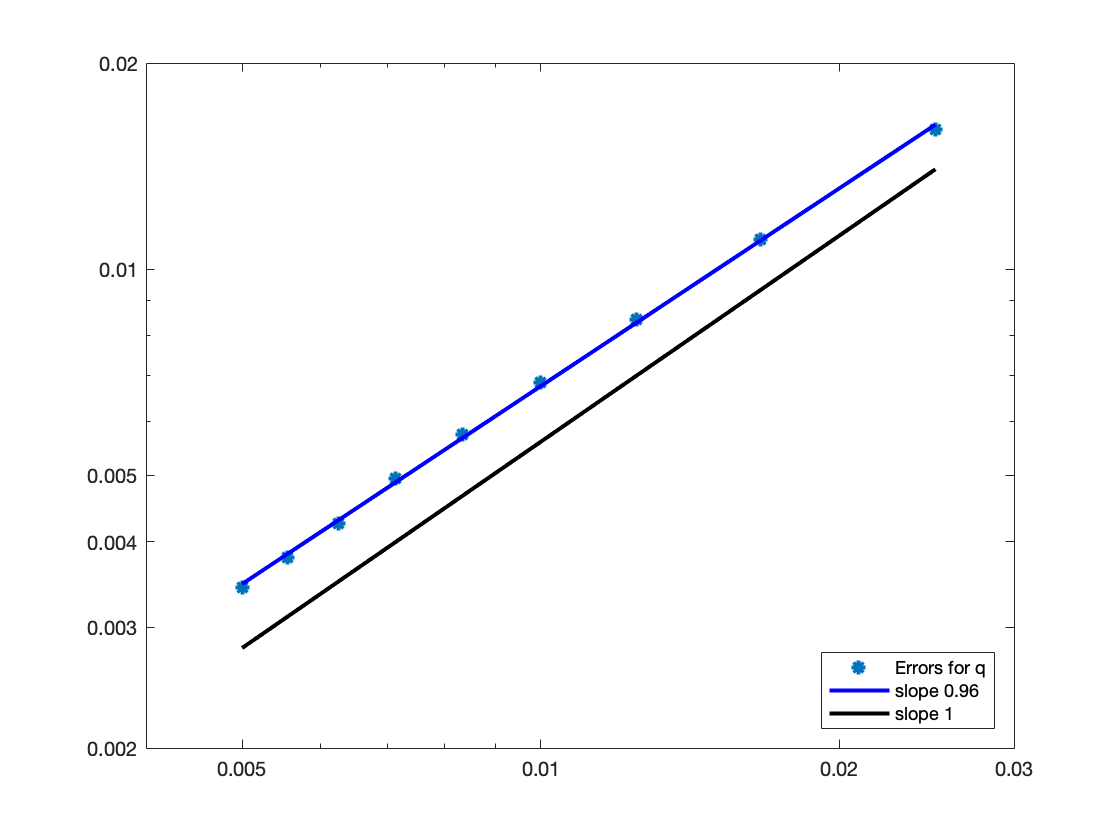}
\end{tabular}
    \caption{Validation of convergence of soliton propagation in the large domain, $L^{\infty}$ error, $\mu = \eps = 0.1$.}
  \label{table_soliton2}
     \end{figure}



To test the imposition of the generating boundary condition we compute the numerical solution of the soliton on the small domain $[0,L]$ with L = 10.   
We use the nonlocal Lax-Friedrichs scheme and a constant time step $\delta_t = 0.8 \, \delta_x$ for $\mu = \eps = 0.3$, and $\delta_t = 0.9 \, \delta_x$ for $\mu = \eps = 0.1$, taking into account the values of  the approximated eigenvalues. 
The space step is computed as $\delta_x = L/n_x$, with $n_x = 400, 600, 800, 1000, 1200, 1400, 1600, 1800, 2000$.
The maximum of the soliton is initially located on the left of the computational domain, at $x = -L/2$, so that the initial datum in the small domain is almost zero, and then the soliton propagates inside it.
The boundary conditions on the left boundary of the small domain are taken into account by imposing the reference solution and its second-order time derivative approximated with the classical centered second-order scheme.
As the initial datum is zero near the right boundary $x = L$, no special effort is necessary for the computation at this boundary if the final time of the simulation is not too large.
The values of $\zeta$ in the small domain at the final time for the reference solution and the numerical solution are  plotted on Figure \ref{res_soliton}.
The numerical results are presented on Figures \ref{table_soliton3} and \ref{table_soliton4}.
On Figure \ref{table_soliton4}, the slope of the linear regression obtained with all error points is completed with the slope of the linear regression obtained with the four more refined error points.
Globally a first-order convergence is observed when the grid is sufficiently refined.

 \begin{figure}[!ht]
\centering
\begin{tabular}{cc}
   \includegraphics[height=50mm]{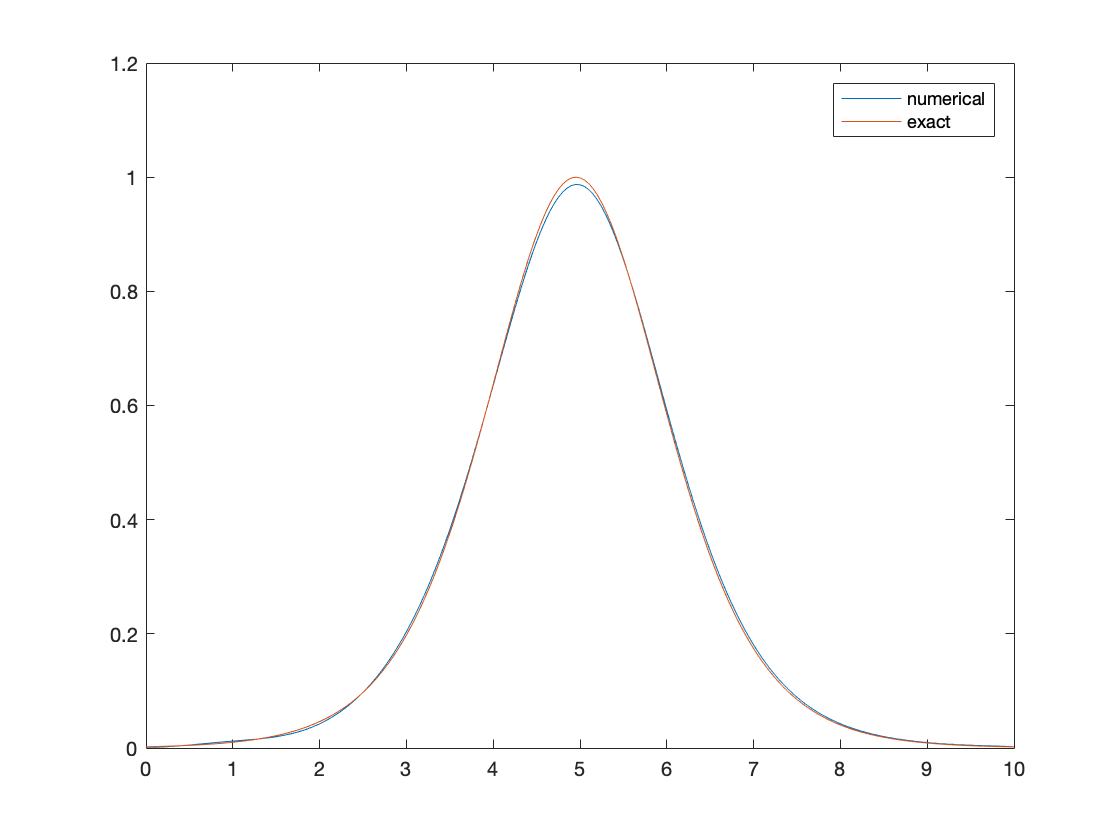}  & \includegraphics[height=50mm]{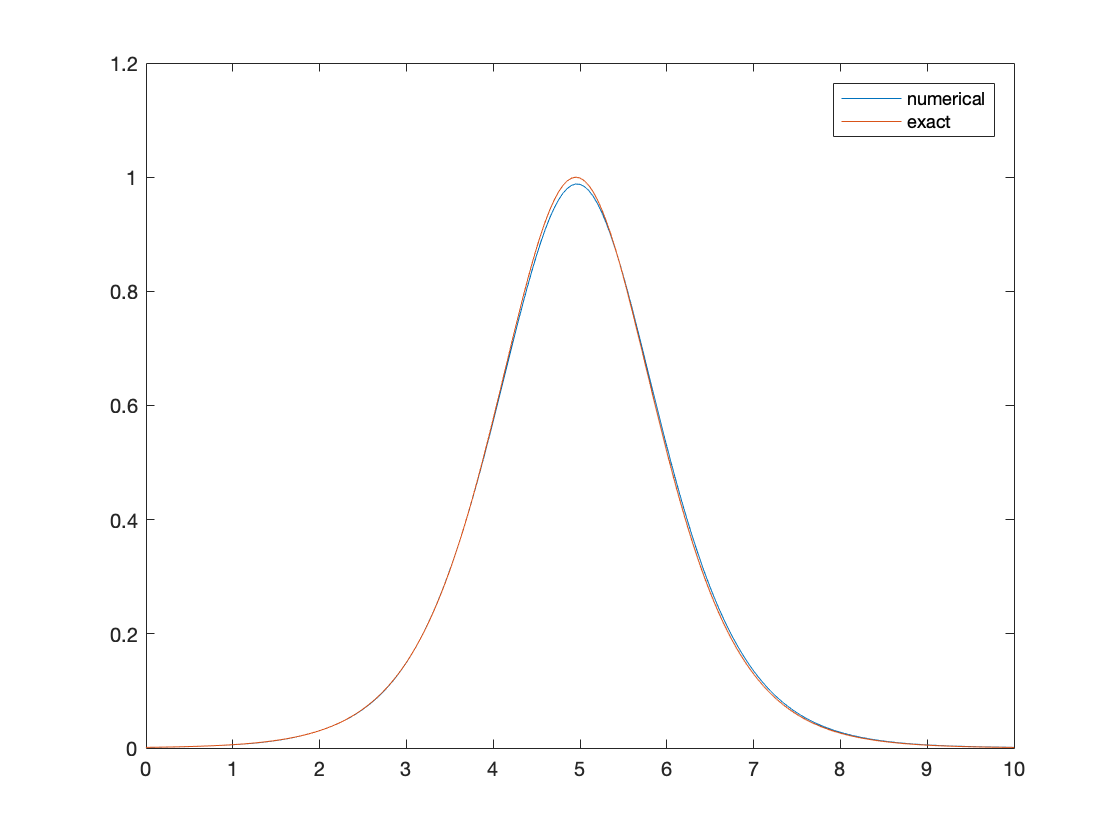}
\end{tabular}
\caption{Soliton: comparison between reference  solution and numerical result for $\zeta$ on the small domain at final time, $\delta_x= L/200$ with $L = 10$, left: $\mu = \eps = 0.3$, right: $\mu = \eps = 0.1$. } { \label{res_soliton}}
   \end{figure}

 \begin{figure}[!ht]
\centering
\begin{tabular}{cc}
   \includegraphics[height=50mm]{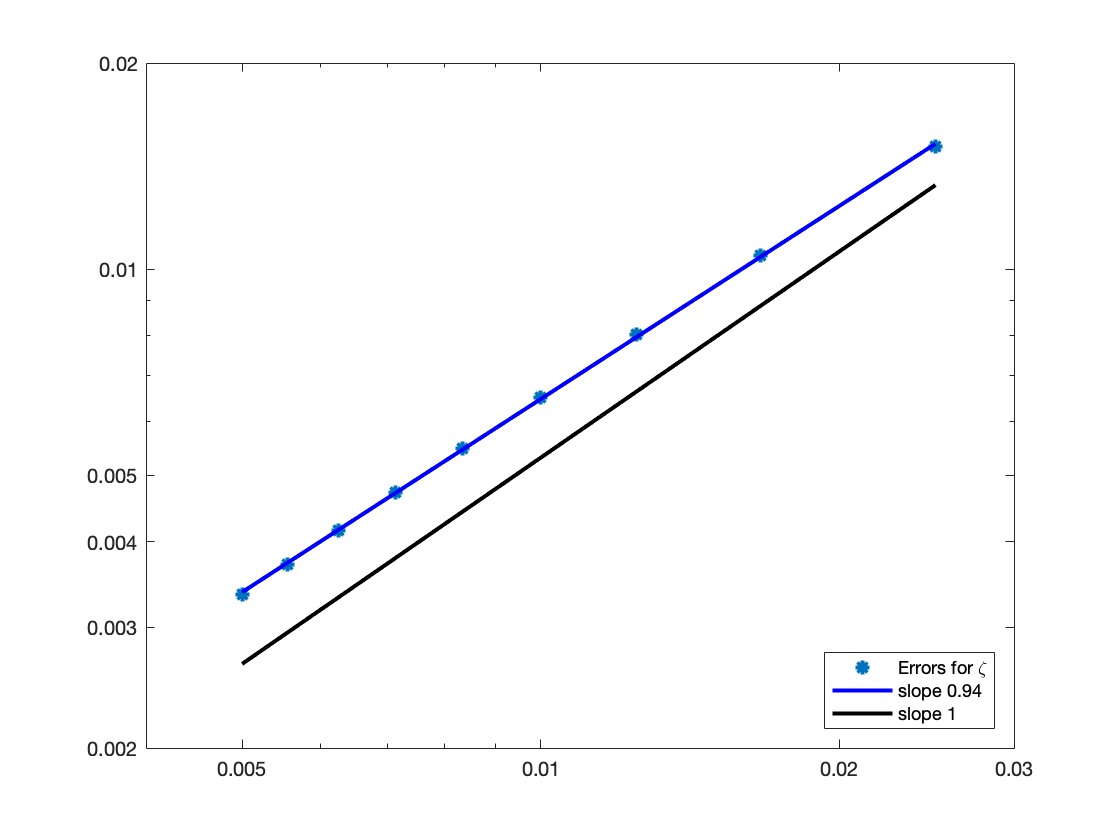}  & \includegraphics[height=50mm]{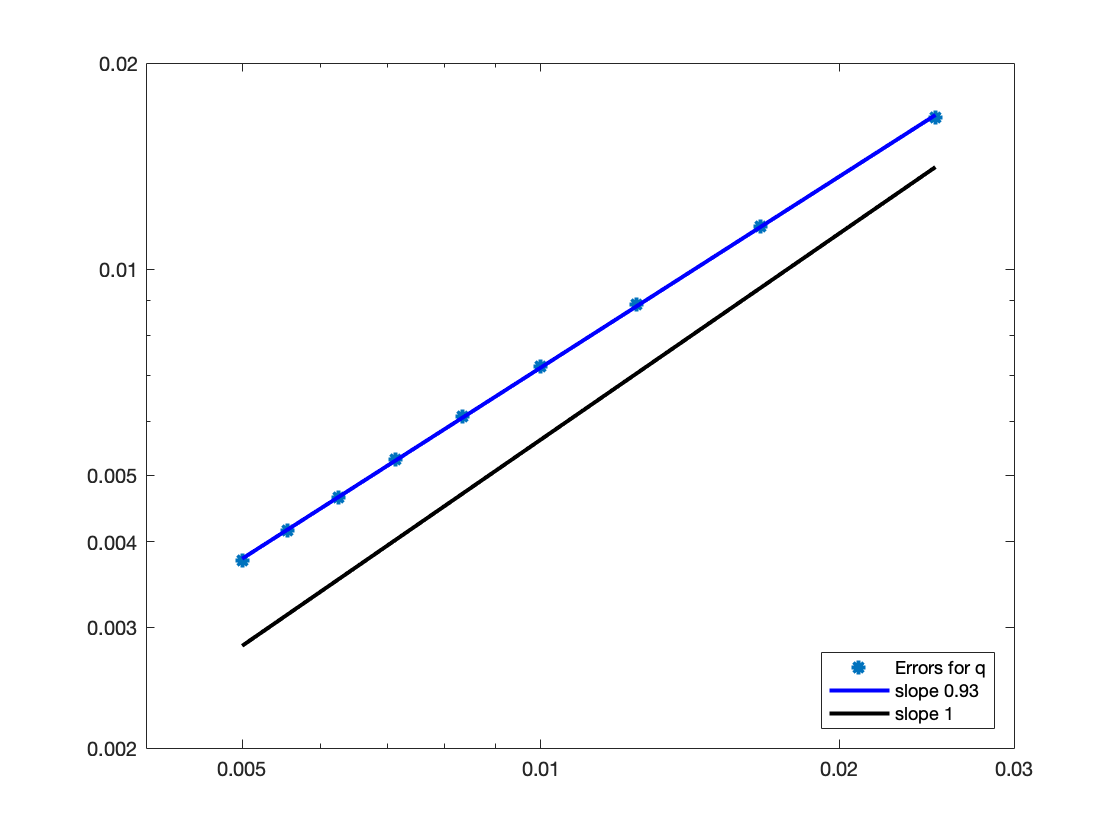}
\end{tabular}
   \caption{Convergence study for the soliton case, $L^{\infty}$ error, $\mu = \eps = 0.3$.}
  \label{table_soliton3}
     \end{figure}

 \begin{figure}[!ht]
\centering
\begin{tabular}{cc}
   \includegraphics[height=50mm]{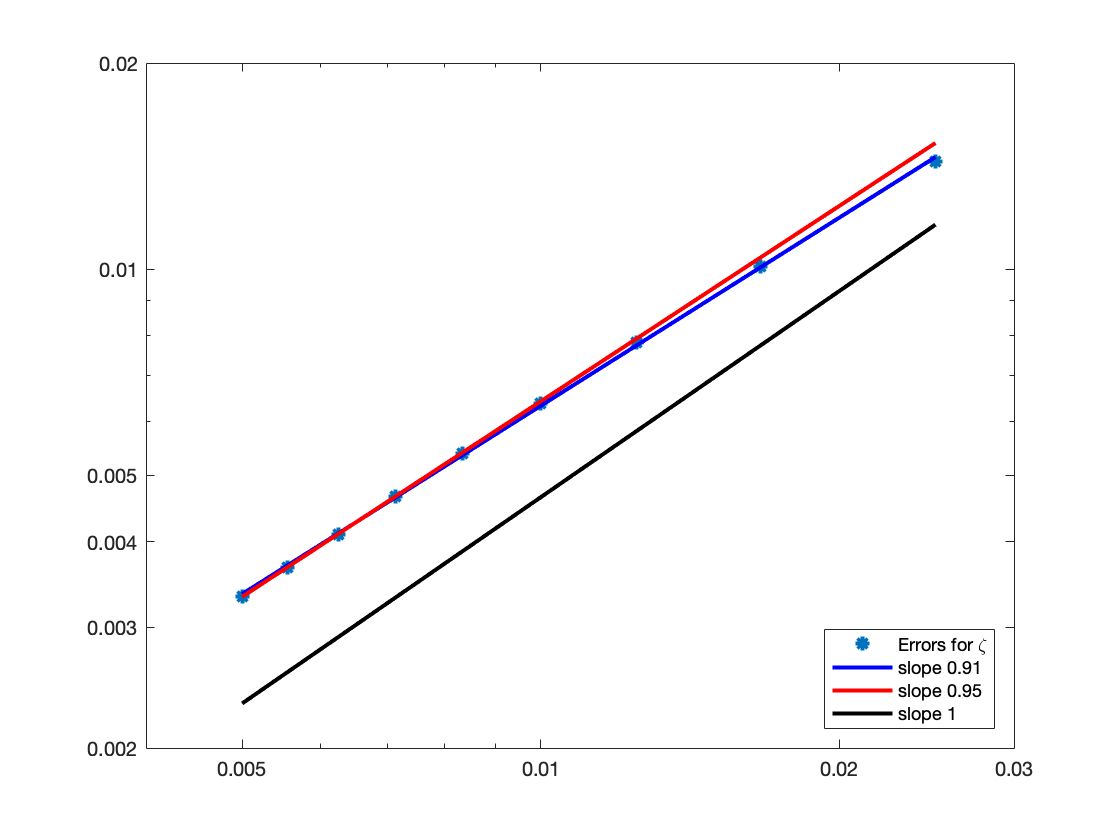}  & \includegraphics[height=50mm]{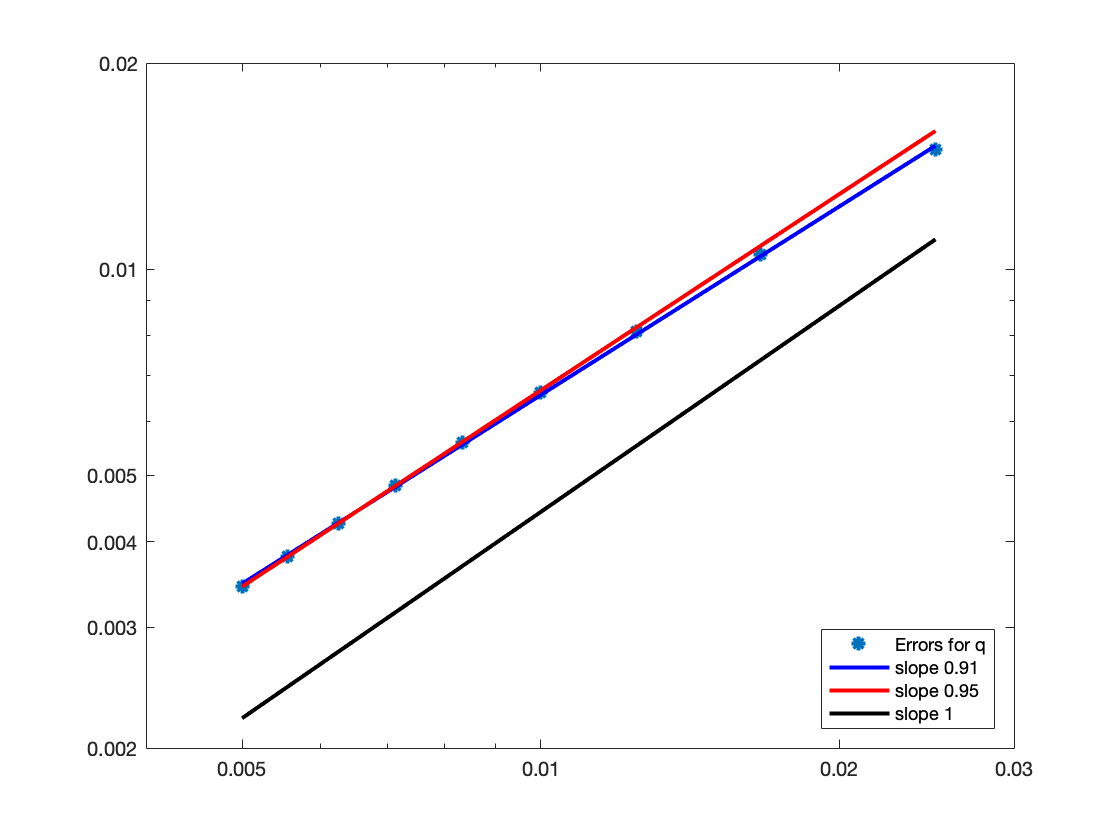}
\end{tabular}
    \caption{Convergence study for the soliton case, $L^{\infty}$ error, $\mu = \eps = 0.1$.}
  \label{table_soliton4}
     \end{figure}

\subsection{Sinusoidal boundary condition }\label{sinus}
We consider the cases
\begin{align*}
({\rm I})\quad  \eps=\mu=0.3,   &\quad  ({\rm II}) \quad \eps=0.1, \quad \mu=0.3 \\
 ({\rm III}) \quad  \eps=\mu=0.1, &\quad  ({\rm IV})\quad  \eps=\mu=0.01.
\end{align*}
Note that case (II) with different values of $\eps$ and $\mu$ has been added here for its relevance for applications in coastal oceanography where a sinusoidal swell is imposed at the entrance of the domain in a region not so shallow (so that $\mu$ is not very small) but the waves are of small amplitude (they become bigger in the shoaling phase, nearer to the shore), so that $\eps$ is small.\\
We first compute a numerical solution $U^L$ with a very refined mesh ($n_x$ =3600) on a larger domain $[-L,L]$, with $L = 10$, with the Lax-Friedrichs scheme and a time step $\delta_t = 0.9 \delta_x$
The initial condition for $U^L$ in the larger domain is
\begin{align}
\zeta^{L}(t=0,x)&= 0; \\
q^L(t=0,x) &= 0,
\end{align}
and we impose until the final time $T_f=15$ the  generating boundary condition 
$$
\zeta(t,x=-L) = \sin (2 \pi t/5).
$$

We define the reference solution on the  slightly smaller domain $[-0.8 \, L,L]$
$$
U^{\rm ref}=(\zeta^{\rm ref},q^{\rm ref})^T:= U^L_{\vert_{[-0.8\, L,L]}}
\quad\mbox{ and }\quad 
f(t):=\zeta^L(t,x= -0.8\, L).
$$
Then we compute a solution with coarse meshes on this smaller domain $[-0.8 \, L,L]$.
The boundary conditions at $x=-0.8 \, L$ are taken into account by imposing the reference solution and its second-order time derivative approximated with the classical centered second-order scheme.
No special effort is necessary for the computation with the coarse mesh at the right boundary $x=L$ until the final time $T_f = 15$.
The values of $\zeta$ in the small domain at the final time for the reference solution and the numerical solution are  plotted on Figure \ref{res_sinus}.
Because there is numerical dissipation, the numerical solution has a smaller amplitude than the reference solution after some time of propagation inside the small domain, but both solutions coincide well near the left boundary.
The error between the reference solution and the solution on the coarse mesh is computed near the left boundary on the interval $ [-0.8\, L , -0.6\, L]$ in order to measure the error due to the generating boundary condition rather than the dissipation error inherent to the Lax-Friedrichs scheme (which stronger in this numerical test than in the previous ones due to the fact the the reference solution involves higher frequencies).  

The numerical results for the cases $({\rm I})$ and $({\rm II})$  are presented on Figures \ref{table_sinus1} and \ref{table_sinus1bis}.
The space steps $\delta_x$ were chosen as $\delta_x = 2L/n_x$, with $n_x$ = 100, 120, 150, 200, 240, 300, 360, 400, 600, 720, 900.
On both Figures the slope of the linear regression obtained with all error points is completed with the slope of the linear regression obtained with the three more refined error points.
The numerical results for the cases $({\rm III})$ and $({\rm IV})$ are presented on \ref{table_sinus2} and \ref{table_sinus3}.
The space steps $\delta_x$ were chosen as $\delta_x = 2L/n_x$, with $n_x = $ 90, 100, 120, 150, 200, 240, 300, 360, 400.
 Globally, a first-order convergence is observed when the grid is sufficiently refined.
%

 \begin{figure}[!ht]
\centering
\begin{tabular}{cc}
   \includegraphics[height=50mm]{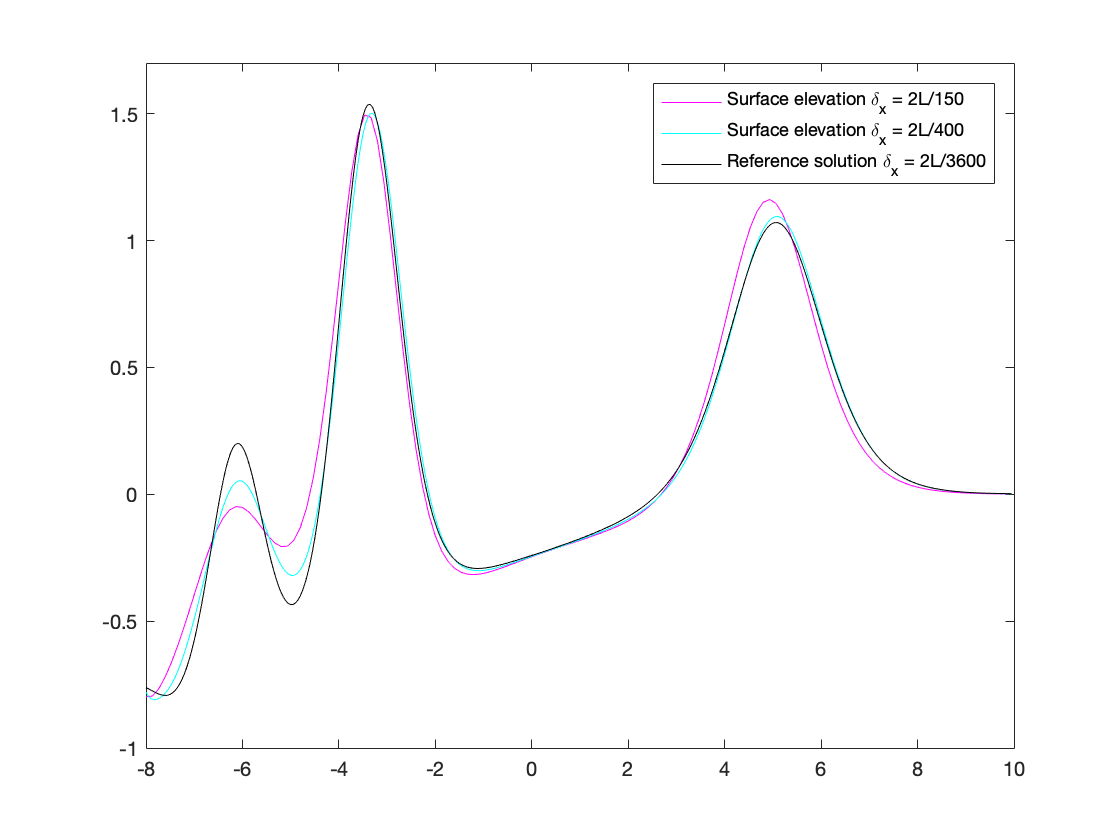}  & \includegraphics[height=50mm]{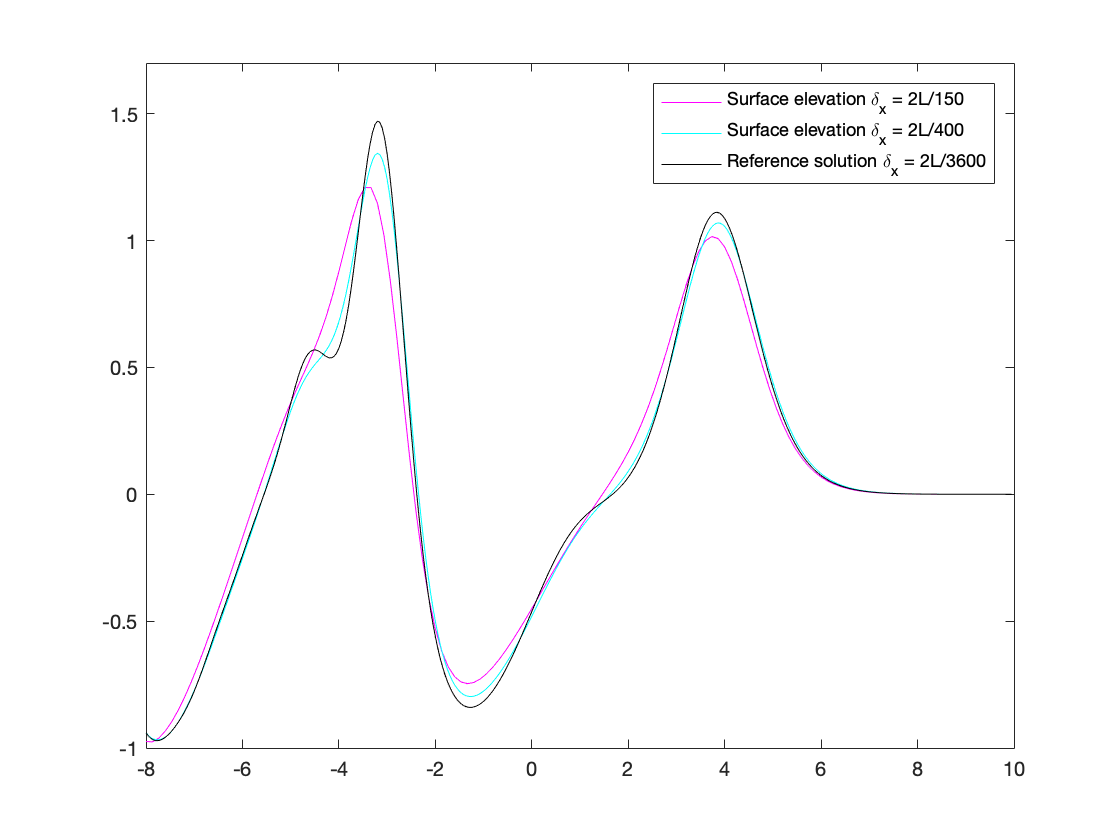}
\end{tabular}
\caption{Sinusoidal boundary condition: comparison between reference  solution and numerical result for $\zeta$ on the small domain at final time $T_f=15$, $\delta_x= 2L/150, 2L/400$ with $L = 10$, left: $\mu = \eps = 0.3$, right: $\mu = \eps = 0.1$. (The numerical solution with $\delta_x= 2L/3600$ coincide with the reference solution) }  \label{res_sinus}
   \end{figure}

 \begin{figure}[!ht]
\centering
\begin{tabular}{cc}
   \includegraphics[height=50mm]{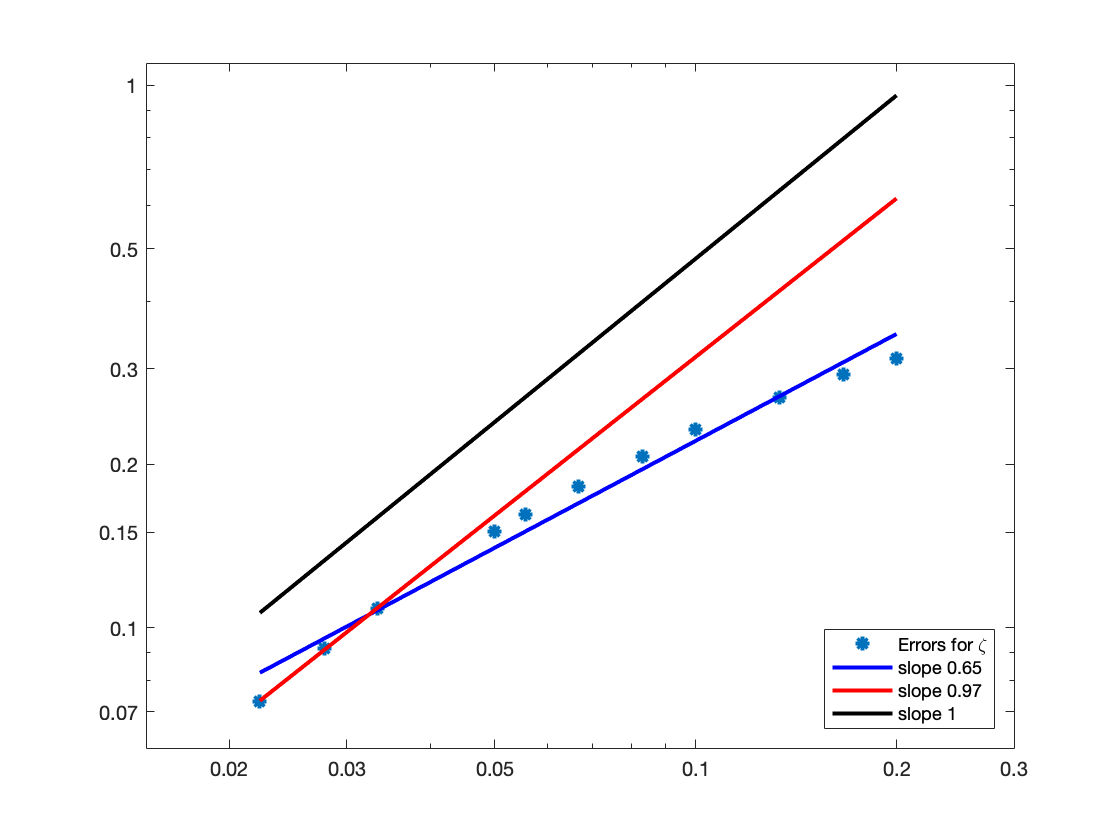}  & \includegraphics[height=50mm]{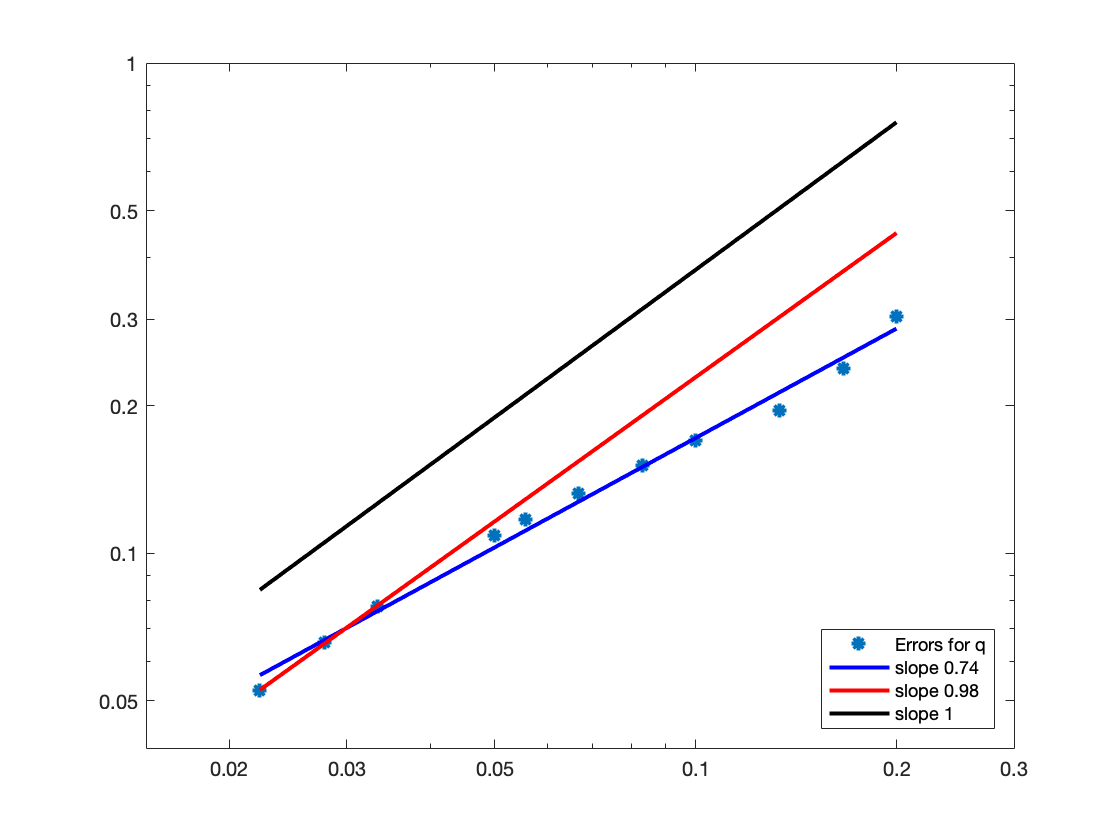}
\end{tabular}
 \caption{Convergence study for the sinusoidal boundary condition, $L^{\infty}$ error, $\mu = \eps = 0.3$.}
  \label{table_sinus1}
     \end{figure}

 \begin{figure}[!ht]
\centering
\begin{tabular}{cc}
   \includegraphics[height=50mm]{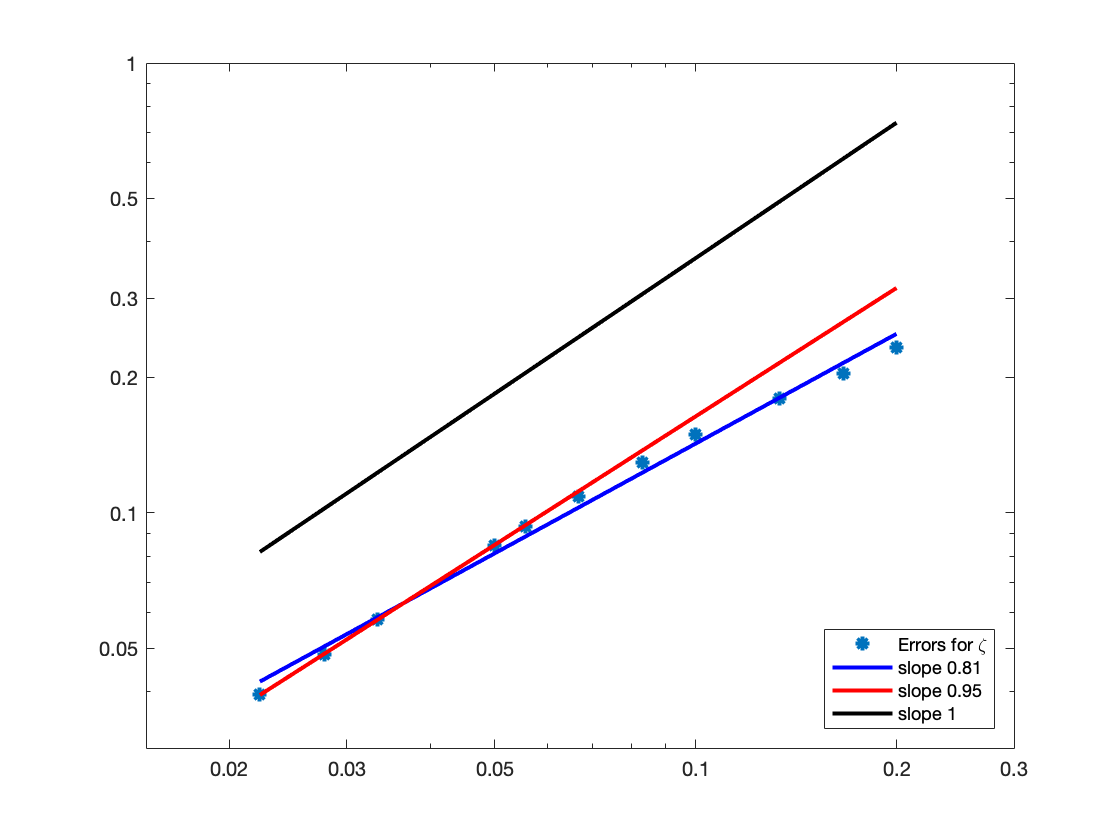}  & \includegraphics[height=50mm]{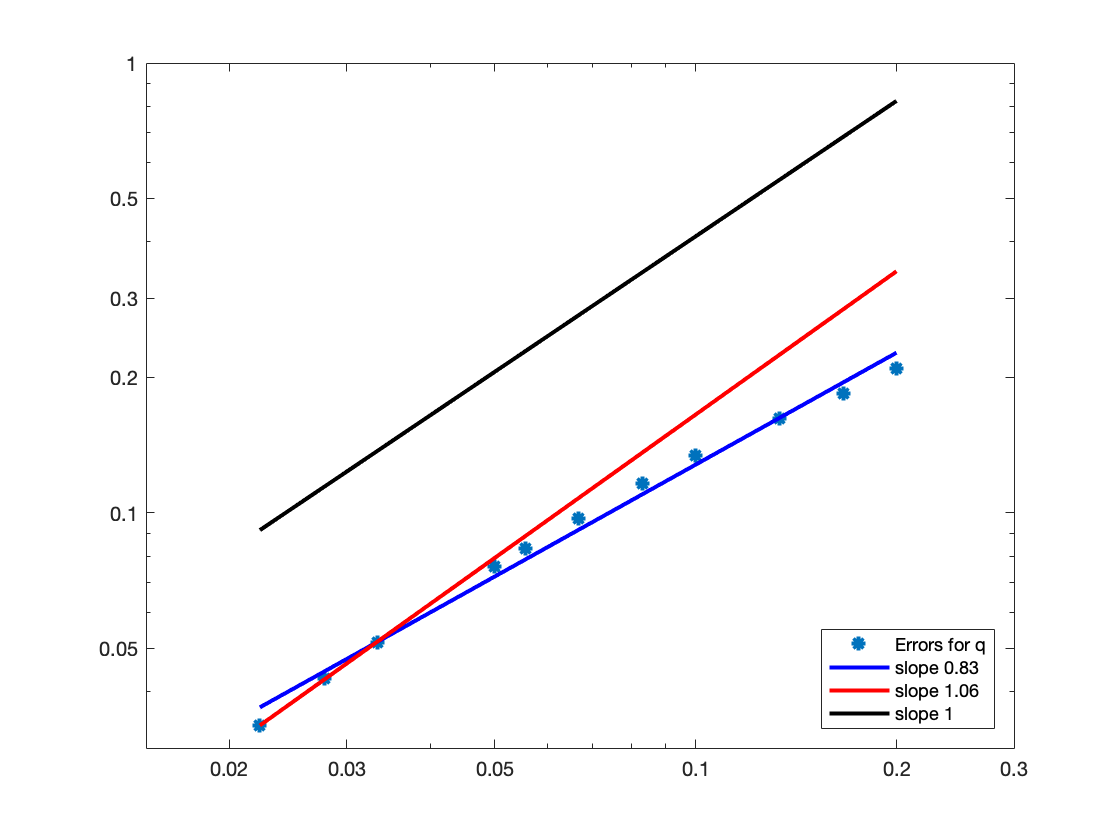}
\end{tabular}
 \caption{Convergence study for the sinusoidal boundary condition, $L^{\infty}$ error, $\mu = 0.3, \eps = 0.1$.}
  \label{table_sinus1bis}
     \end{figure}

 \begin{figure}[!ht]
\centering
\begin{tabular}{cc}
   \includegraphics[height=50mm]{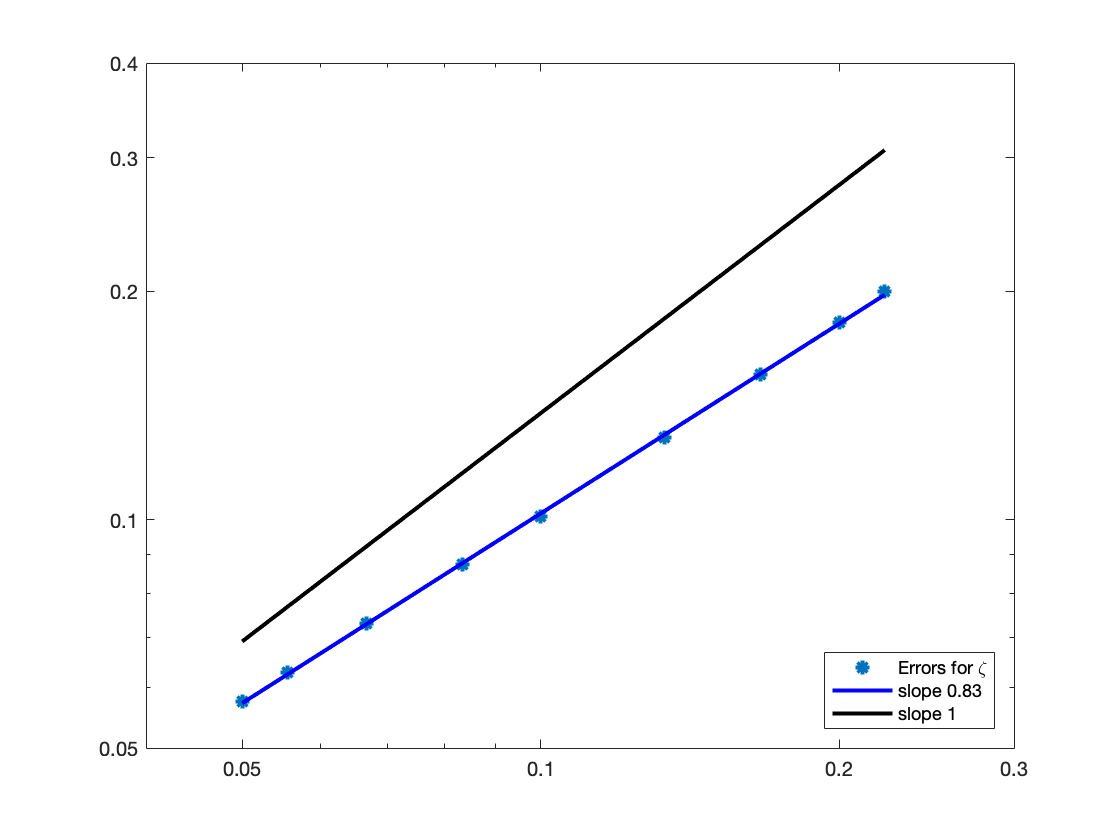}  & \includegraphics[height=50mm]{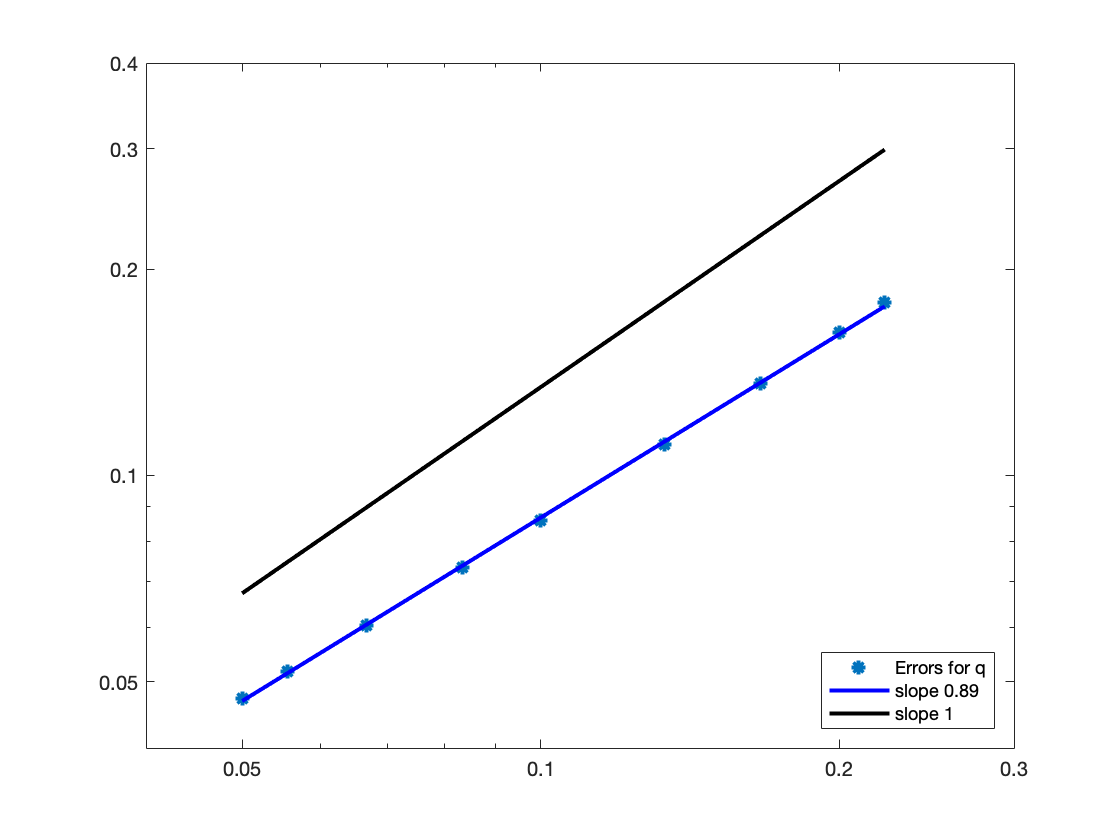}
\end{tabular}
 \caption{Convergence study for the sinusoidal boundary condition, $L^{\infty}$ error, $\mu = \eps = 0.1$.}
  \label{table_sinus2}
     \end{figure}

\begin{figure}[!ht]
\centering
\begin{tabular}{cc}
   \includegraphics[height=50mm]{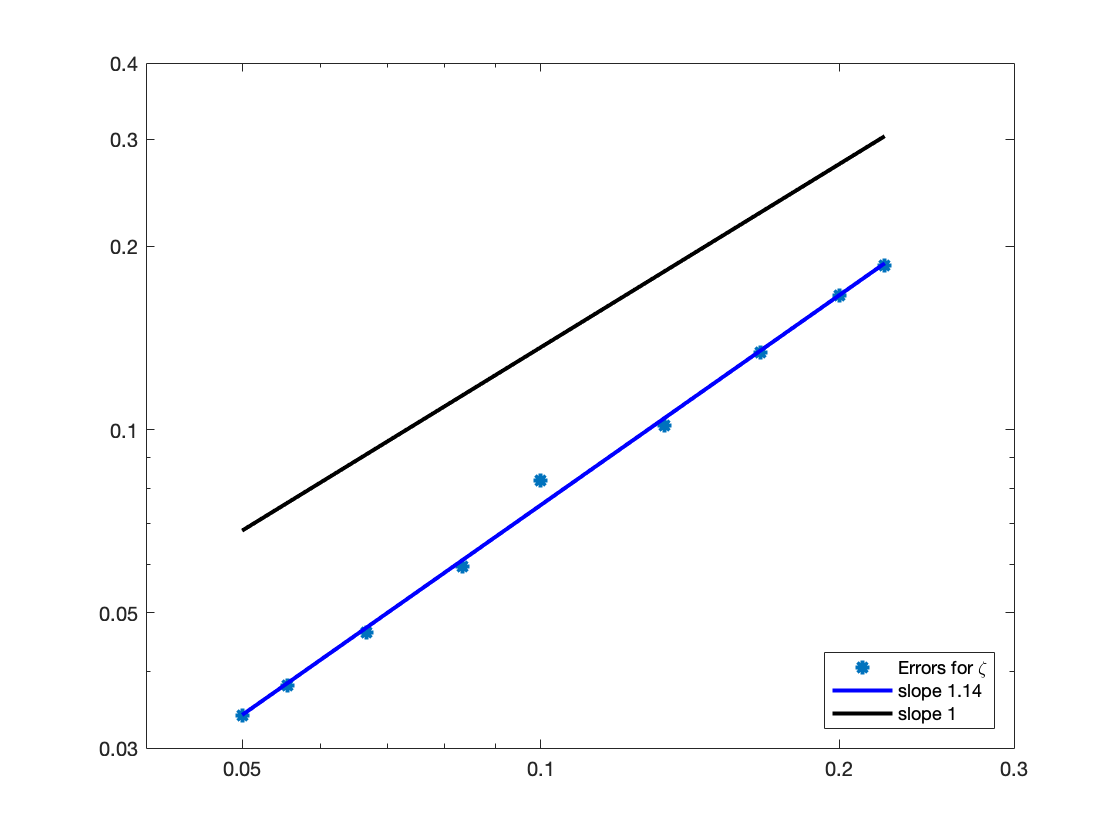}  & \includegraphics[height=50mm]{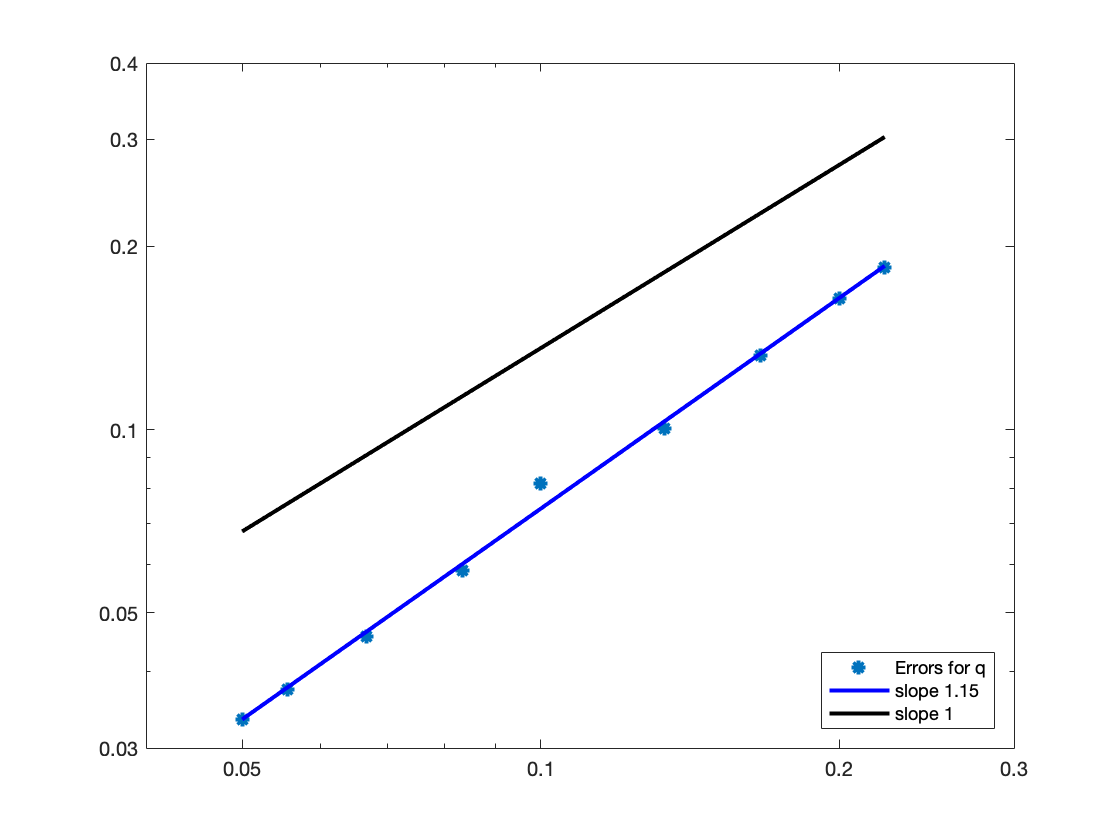}
\end{tabular}
 \caption{Convergence study for the sinusoidal boundary condition, $L^{\infty}$ error at final time, $\mu = \eps = 0.01$.}
  \label{table_sinus3}
     \end{figure}

\end{document}